\documentclass{article}
\usepackage{graphicx,amsmath,amssymb,amsfonts,amsthm, comment,color,bm} 

\usepackage[algoruled]{algorithm2e}

\newtheorem{theorem}{Theorem}
\newtheorem{lemma}[theorem]{Lemma}
\newtheorem{corollary}[theorem]{Corollary}

\newtheorem{remark}{Remark}
\newtheorem{assumption}{Assumption}

\newcommand{\offdiag}{{\rm offdiag}}
\newcommand{\ones}{{\bf 1}}
\newcommand{\bv}{{\bm v}}
\newcommand{\bw}{{\bm w}}
\newcommand{\bx}{{\bm x}}

\newcommand{\bd}{{\bm d}}
\newcommand{\bt}{{\bm t}}
\newcommand{\bg}{{\bm g}}
\newcommand{\by}{{\bm y}}
\newcommand{\be}{{\bm e}}
\newcommand{\bz}{{\bm z}}

\newcommand{\bb}{{\bm b}}
\newcommand{\bp}{{\bm p}}
\newcommand{\bsu}{{\bm u}}
\newcommand{\wh}{\widehat}
\newcommand{\wt}{\widetilde}
\newcommand{\dotle}{\dot{\le}}
\newcommand{\fl}{\hbox{\rm fl}}
\newcommand{\sml}{\footnotesize}

\definecolor{mypink1}{rgb}{0.858, 0.188, 0.478}
\definecolor{mypink2}{RGB}{219, 48, 122}
\definecolor{mypink3}{cmyk}{0, 0.7808, 0.4429, 0.1412}
\definecolor{mygray}{gray}{0.7}
\definecolor{mygray1}{gray}{0.6}
\definecolor{myred}{rgb}{1,0,1}

\title{Component-wise accurate computation of the square root of an M-matrix}

\author{Dario A. Bini\thanks{Dipartimento di Matematica Universit\`a di Pisa} \and Bruno Iannazzo\thanks{Università degli Studi di Perugia}\and Beatrice Meini\thanks{Dipartimento di Matematica Universit\`a di Pisa} \and Jie Meng\thanks{Ocean University of China}}

\begin{document}
	\maketitle

\begin{abstract}
Component-wise accurate algorithms for computing the principal square root of an M-matrix are designed in terms of triplet representations.  A triplet representation of an M-matrix $A$ is the triple $(P,\bsu,\bv)$, where the matrix $P$ is such that $p_{ij}=-a_{ij}$ for $i\ne j$, $p_{ii}=0$, and $\bsu>0$, $\bv\ge 0$ are two vectors such that $A\bsu=\bv$. It is shown that if $A$ is an M-matrix representable by a triplet, then its principal square root exists and is an M-matrix represented by a triplet as well. New versions of the Cyclic Reduction and the Incremental Newton iterations are provided  in terms of triplets, to compute the principal matrix square root of $A$. It is shown that these algorithms are component-wise numerically stable independently of the singularity of $A$ and of its condition number. Numerical experiments are shown to confirm the component-wise stability.
\end{abstract}

{\bf Keywords: }
M-matrices, Matrix square roo, Component-wise stability, GTH algorithm, Cyclic reduction, Newton's method

\section{Introduction}\label{sec:intro}
An M-matrix is a real square matrix $A$ such that $A=\alpha I-B$, where $I$ is the identity matrix, $B$ is component-wise nonnegative (nonnegative for short), and $\alpha\ge \rho(B)$, the spectral radius of $B$.
This class of matrices arises in a wide range of applications, spanning from network theory to applied probability and the discretization of PDEs. This reason has made them attractive in linear algebra, where their properties have been extensively investigated \cite[Ch. 6]{bp:book}.
An interesting feature of M-matrices is that if $\alpha>\rho(B)$ then the matrix $A$ is invertible and the inverse is nonnegative. Thus, also the solution of the linear system $A\bx=\bb$, where $A$ is an invertible M-matrix and $\bb$ is nonnegative, is a nonnegative vector.

The customary theory of numerical conditioning gives bounds for the forward error, in norm, of the solution of a linear system subject to perturbations. If $\wt A$ and $\wt {\bm b}$ are sufficiently small perturbations of the invertible matrix $A$ and the nonzero vector $\bm b$, respectively, and if $\wt {\bx}$, $\bx$ are such that $A\bx=\bm b$ and $\wt A\wt {\bx}=\wt {\bm b}$, then it is well known that, for any operator norm $\|\cdot\|$, the following inequality holds
\[
    \frac{\|\wt {\bx}-\bx\|}{\|\bx\|}\le \frac{\mu(A)}{1-\mu(A)\varepsilon_A}(\varepsilon_A+\varepsilon_b),\qquad 
    \varepsilon_A = \frac{\|\wt A-A\|}{\|A\|},\qquad \varepsilon_b=\frac{\|\wt {\bm b}-{\bm b}\|}{\|{\bm b}\|},
\]
where $\mu(A)=\|A\|\|A^{-1}\|$ is the condition number of $A$. Thus, $\mu(A)$ can be viewed as an amplification factor of the error in the solution of linear systems.

Nevertheless, the positivity features of an M-matrix allow one to give, under mild conditions, uniform bounds on the component-wise forward relative error where the condition number magically disappears.
This property can be obtained through the {\em triplet representation} of the M-matrix $A$ \cite{poloni15,li12}, that consists in the triple $(P,\bsu,\bv)$, where $P=(p_{ij})_{i,j=1,\ldots,n}$ is the nonnegative matrix such that $p_{ij} = -a_{ij}$ for $i\ne j$ and $p_{ij}=0$ for $i=j$, while $\bsu$ and $\bv$ are a positive and a nonnegative vector, respectively, such that $A\bsu=\bv$. 

If $A$ is a nonsingular M-matrix with the triplet representation $(P,\bsu,\bv)$ and if $\wt A$ has a triplet representation $(\wt P,\bsu,\wt \bv)$ such that $|\wt p_{ij}-p_{ij}|\le \varepsilon| p_{ij}|$ for $i\ne j$ and $|\wt v_i-v_i|\le \varepsilon v_i$, then for a sufficiently small $\varepsilon$, the matrix $\wt A$ is a nonsingular M-matrix and one gets an upper bound,  independent of $\mu(A)$, for the relative variation of the entries of the inverse  (see \cite[Lemma 2.1]{alfa} and \cite[Thm. 2.2 and Eq. 2.7]{li12}). From this property, one deduces the following inequality for $\bx$ and $\wt \bx$ such that $A\bx =\bb$ and $\wt A\wt \bx=\wt \bb$, with nonnegative $\bb$ and $\wt{\bb}$:
\[
    {|\wt x_i-x_i|}\
    \le {|x_i|} 
    \gamma n \varepsilon+o(\varepsilon),
\]
where $\gamma$ is a small constant independent of $A$. Similar results can be obtained when there is a perturbation also on the vector $\bsu$.
This error bound is practically attained by the GTH-like algorithm \cite{gth}, which is a variant of Gaussian elimination based on the triplet representation \cite{poloni15}. A nice feature of the GTH algorithm, with respect to Gaussian elimination, is that there is no cancellation, that is, numbers of opposite sign are never added. 

The use of the triplet representation is not limited to the component-wise accurate solution of linear systems with nonnegative right-hand side, but can be used for the component-wise accurate solution of other basic quantities, such as the inverse $A^{-1}$,  the Schur complement of $A$ (that is known to be an M-matrix), a nonnegative eigenvector \cite{alfa}.
It has also been used to accurately compute the solution of some more complicated problems, such as the minimal nonnegative solution of algebraic Riccati equations  \cite{li21,li20,poloni15,li17} associated with M-matrices.
Other uses of the triplet representation include the solution of Sylvester equations \cite{li12} and the solution of quadratic matrix equations  \cite{li19,li22,poloniCR} such as those encountered in Markov Modulated Processes.

If the M-matrix $A$ is invertible or singular with semisimple zero eigenvalue, then there exists a unique solution of the matrix equation $X^2=A$ that is a polynomial of $A$ and whose nonzero eigenvalues have positive real part (see the next Lemma \ref{thm:1}). This solution, which turns out to be an M-matrix (see the next Theorem \ref{thm:triplsqrt}), is denoted by $A^{1/2}$ and is called the {\em principal square root} of the matrix $A$.

In this paper, we are interested in the accurate computation of the principal square root of an M-matrix assigned in terms of its triplet representation.
This problem finds some applications in the analysis of fractional diffusion problems in complex network analysis \cite{fabio}. Moreover, the square root and inverse square root of a matrix have many applications in a variety of scientific computing and engineering applications \cite{frommer}. For instance, the square root and inverse square root of discretized differential operators arise when computing Dirichlet-to-Neumann and Neumann-to-Dirichlet maps \cite{druskin}.

Here, we propose algorithms for computing the principal square root of an M-matrix $A$, based on the triplet representation, which are component-wise numerically stable. More precisely, given a triplet representation of $A$, we prove that $A^{1/2}$ has a triplet representation and we compute it with algorithms designed such that in the overall computation, numerical cancellation is never encountered.

Since the state-of-the-art algorithm for computing the principal square root, based on the Schur form of $A$ \cite{Deadman}, is not suited for the triplet representation, we will focus our attention on iterative methods based on the Newton iteration that produce sequences of M-matrices. 
The variants of the Newton method we consider are the Cyclic Reduction (CR) of \cite{meini}, and the Incremental Newton (IN) iteration of \cite{Iannazzo2003273}, see also \cite[Chapter 7]{higham:book}. For these algorithms, we provide suitable versions where at each step the triplet representing the current approximation is updated by performing only multiplications, divisions, and additions of positive numbers. This way, unlike the standard implementation, the sequence of approximations can be computed without numerical cancellation, ensuring that each step is component-wise accurate. Consequently, the overall computation yields a component-wise numerically stable approximation of the square root, represented in terms of triplets. This is confirmed by numerical experiments where our algorithms compute all the entries of $A^{1/2}$ with relative numerical accuracy close to the machine precision, while the MATLAB command {\tt sqrtm}, as well as the standard implementation of the customary algorithms, fail for the entries of smallest modulus.
It must also be observed that the implementation of these algorithms, given in terms of triplets, does not increase the computational complexity of each step or  affect the convergence speed of the sequences generated by CR and IN.

The paper is organized as follows. In Section \ref{sec:prel}, we recall some useful results on M-matrices and some properties of the triplet representation of an M-matrix.
Section \ref{sec:sqrt} summarizes the properties of the square roots of M-matrices.
In Section \ref{sec:crin} we deal with algorithmic issues. In particular, in Section \ref{sec:cr}, we reformulate in terms of triplets the algorithms of \cite{meini} based on Cyclic Reduction together with its accelerated version, and in Section \ref{sec:in}, the algorithm of \cite{Iannazzo2003273} based on the Incremental Newton.
Section \ref{sec:numan} outlines some results useful to perform the rounding error analysis of a single step of the methods introduced in the previous sections. Section \ref{sec:numexp} presents the results of some numerical experiments, which show that the new proposed algorithms maintain a component-wise relative error bounded by a small multiple of the machine precision, whereas the classical versions produce a very large component-wise relative error.
Finally, Section \ref{sec:conc} contains final remarks and conclusions.

\section{Preliminaries}\label{sec:prel}
\subsection{M-matrices}
We recall the main definition and properties of M-matrices. We refer the reader to the book \cite{bp:book}, for more details.

A real matrix $A=(a_{ij})_{ij}$ is said to be nonnegative if $a_{ij}\ge 0$ for any $i,j$, and we write $A\ge 0$, is positive if $a_{ij}>0$ for any $i,j$, and we write $A>0$.
An $n\times n$ real matrix  $A$ is said to be an  M-matrix if it can be written as
$A=\alpha I-B $,  where $B\ge 0$, $\alpha\ge \rho(B)$, and $\rho(B)$ is the spectral radius of $B$.
An M-matrix $A$ has ``Property C'' if it can be written as $A=\alpha I -B$ with $B$ nonnegative and the eigenvalues of $B/\alpha$ belong to $\{z\in\mathbb C\,:\,|z|<1\}\cup\{1\}$, where if $1$ is an eigenvalue of $B$ then it is semisimple. Property C is equivalent to the fact that $A$ is nonsingular or is singular with semisimple $0$ eigenvalue \cite[Thm. 4.11]{bp:book}.

The following properties of M-matrices will be used in the sequel, we refer the reader to \cite[Chapter 6]{bp:book} for more details.

\begin{theorem}\label{1}
Let $A$ be an M-matrix. Then
\begin{itemize}
\item[(1)] The principal submatrices are M-matrices;
\item[(2)] The nonzero eigenvalues of $A$ lie in the open right half complex plane;
\item[(3)] $\Pi^\top  A\Pi$ is an M-matrix if $\Pi$ is a permutation matrix.
\end{itemize}
\end{theorem}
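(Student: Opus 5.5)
All three items follow from the definition $A=\alpha I-B$ with $B\ge 0$ and $\alpha\ge\rho(B)$, together with standard Perron--Frobenius facts about nonnegative matrices. I treat each item separately, starting with (3) and (2), which are the simplest.

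\emph{Item (3).} Since $\Pi$ is a permutation matrix, $\Pi^\top\Pi=I$. Hence
\[
\Pi^\top A\Pi=\alpha I-\Pi^\top B\Pi .
\]
The matrix $\Pi^\top B\Pi$ only permutes the entries of $B$, so it is nonnegative. It is also similar to $B$, so $\rho(\Pi^\top B\Pi)=\rho(B)\le\alpha$, and item (3) follows.

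\emph{Item (2).} Every eigenvalue $\lambda$ of $A$ has the form $\lambda=\alpha-\mu$, where $\mu$ is an eigenvalue of $B$ with $|\mu|\le\rho(B)\le\alpha$. Therefore
\[
\mathrm{Re}\,\lambda=\alpha-\mathrm{Re}\,\mu\ge\alpha-|\mu|\ge 0 .
\]
Equality $\mathrm{Re}\,\lambda=0$ forces both $\mathrm{Re}\,\mu=|\mu|$ and $|\mu|=\alpha$. These together give $\mu=\alpha$, hence $\lambda=0$. So every nonzero eigenvalue satisfies $\mathrm{Re}\,\lambda>0$.

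\emph{Item (1).} This is the only item needing a genuine nonnegative-matrix fact, so I expect it to be the main step. Let $J$ be an index set. Then
\[
A[J,J]=\alpha I-B[J,J],\qquad B[J,J]\ge 0 .
\]
It remains to show that $\rho(B[J,J])\le\rho(B)$. Let $\widehat B$ be the matrix that equals $B$ on the block $J\times J$ and is zero elsewhere. Then $0\le\widehat B\le B$ entrywise. By monotonicity of the spectral radius on nonnegative matrices, $\rho(\widehat B)\le\rho(B)$. To justify this monotonicity directly, note that $0\le\widehat B^k\le B^k$ entrywise for every $k$. Taking any monotone norm (such as the $\infty$-norm) and applying Gelfand's formula $\rho(M)=\lim_k\|M^k\|^{1/k}$ gives the inequality. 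Finally, after a permutation $\widehat B$ is block diagonal with blocks $B[J,J]$ and $0$, so $\rho(\widehat B)=\rho(B[J,J])$. This completes item (1).
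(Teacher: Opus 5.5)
Your proof is correct in all three items. For (3), the similarity argument is sound. For (2), so is the shift $\lambda=\alpha-\mu$ with $|\mu|\le\rho(B)\le\alpha$. For (1), the bound $\rho(B[J,J])=\rho(\widehat B)\le\rho(B)$, obtained from $0\le\widehat B^k\le B^k$ and Gelfand's formula, holds.

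The paper gives no proof of its own here and simply cites Berman--Plemmons, Chapter 6. Your derivation is therefore a self-contained rendering of the standard facts the paper relies on, worked directly from the definition $A=\alpha I-B$.
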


\begin{theorem}\label{2}
Let $A$ be a nonsingular M-matrix. Then
\begin{itemize}
\item[(1)] There exists a vector $\bsu>0$ such that $A\bsu\ge 0$;
\item[(2)] $A^{-1}\ge 0$ and $A^{-1}>0$ if $A$ is irreducible.
\end{itemize}
\end{theorem}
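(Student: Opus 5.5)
Both claims come from the splitting $A=\alpha I-B$ with $B\ge 0$ and $\alpha>\rho(B)$, which exists because $A$ is a nonsingular M-matrix. Since $A$ is nonsingular, $0$ is not an eigenvalue, so $\alpha\neq\rho(B)$ and $\alpha>0$. We first establish part (2), because part (1) follows from it.

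For (2), we write $A=\alpha(I-B/\alpha)$ with $\rho(B/\alpha)<1$. The Neumann series then converges and gives
\[
A^{-1}=\alpha^{-1}\sum_{k\ge 0}(B/\alpha)^k .
\]
Each term is nonnegative, so $A^{-1}\ge 0$. If $A$ is irreducible, then $B$ has the same off-diagonal zero pattern as $-A$, so $B$ is irreducible too. Irreducibility means that for every pair $(i,j)$ there is a $k\le n-1$ with $(B^k)_{ij}>0$; for $i=j$ we can take $k=0$. Because every term of the series is nonnegative, that single positive term forces $(A^{-1})_{ij}>0$. Hence $A^{-1}>0$. The only point needing care is convergence of the series, which is the standard fact that $\rho(C)<1$ implies $\sum_k C^k=(I-C)^{-1}$.

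For (1), we take $\bsu=A^{-1}\ones$, where $\ones$ is the vector of all ones. Then $A\bsu=\ones\ge 0$. By (2) we have $\bsu\ge 0$, and every component $u_i$ is strictly positive. Indeed, $u_i=\sum_j (A^{-1})_{ij}$, and row $i$ of $A^{-1}$ cannot be identically zero because $A^{-1}$ is nonsingular. This step does not need irreducibility.

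The main subtlety is the positivity claim for irreducible $A$, namely that the zero pattern of $B$ yields a positive path term for every index pair. It is handled by the standard equivalence between irreducibility and strong connectivity of the directed graph of $B$. Everything else is direct.
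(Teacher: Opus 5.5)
Your proof is correct. The paper gives no argument of its own and defers to Berman--Plemmons (Ch.~6), where these facts are derived in essentially the same way: the Neumann series $A^{-1}=\alpha^{-1}\sum_{k\ge 0}(B/\alpha)^k$, strong connectivity of the graph of $B$ for strict positivity, and $\bsu=A^{-1}\ones$ for the positive vector. The one ingredient you use without naming is the Perron--Frobenius fact that $\rho(B)$ is itself an eigenvalue of $B\ge 0$; this is what makes nonsingularity of $A$ force $\alpha>\rho(B)$ rather than merely $\alpha\ge\rho(B)$.
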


\begin{theorem}\label{3}
Let $A\in\mathbb R^{n\times n}$ be a singular irreducible M-matrix. Then
\begin{itemize}
\item[(1)] There exists a vector $\bsu>0$ such that $A\bsu=0$;
\item[(2)] $A$ has rank $n-1$;
\item[(3)] Each principal submatrix of $A$ other than $A$ is a nonsingular M-matrix;
\item[(4)] If $A\bx\ge 0$ then $A\bx=0$;
\item[(5)] $A$ has Property C.\end{itemize}
\end{theorem}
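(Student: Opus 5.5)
The natural tool is Perron--Frobenius theory applied to a nonnegative irreducible matrix. Write $A=\alpha I-B$ with $B\ge 0$. Since $A$ is singular and $\alpha\ge\rho(B)$, we have $\alpha=\rho(B)$. Irreducibility of $A$ is the same as irreducibility of $B$, because the two matrices have the same off-diagonal zero pattern. Perron--Frobenius then says that $\rho(B)$ is a simple eigenvalue of $B$ with a positive eigenvector $\bsu$. Since $A\bsu=(\rho(B)I-B)\bsu=0$, this gives item (1). Simplicity of $\rho(B)$ gives algebraic multiplicity one for the eigenvalue $0$ of $A$. Hence $\dim\ker A=1$, which is item (2).

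For item (5), algebraic multiplicity one forces the zero eigenvalue to be semisimple. The equivalence with Property C recalled before Theorem \ref{1} (\cite[Thm. 4.11]{bp:book}) then finishes. Alternatively, one can check Property C directly. Shift to $A=(\alpha+1)I-B'$ with $B'=B+I$; then $B'$ is irreducible with positive diagonal, hence primitive. So $B'/(\alpha+1)$ has $1$ as a simple eigenvalue and all other eigenvalues strictly inside the unit disk.

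For item (3), take a proper principal submatrix $A_J=\alpha I-B_J$. It is an M-matrix by Theorem \ref{1}(1). By the Perron--Frobenius monotonicity result for irreducible $B$, every proper principal submatrix satisfies $\rho(B_J)<\rho(B)=\alpha$. Therefore $A_J$ is nonsingular. This strict inequality is the only step that needs care. To prove it, let $\bw\ge 0$ be a Perron vector of $B_J$, and pad it with zeros to get $\hat\bw$. Then $B\hat\bw\ge\rho(B_J)\hat\bw$. Pair this with a positive left Perron vector $\bz^\top$ of $B$:
\[
\rho(B)\,\bz^\top\hat\bw=\bz^\top B\hat\bw\ge\rho(B_J)\,\bz^\top\hat\bw .
\]
Equality would force $B\hat\bw=\rho(B_J)\hat\bw$ componentwise. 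This is impossible, because irreducibility gives a link from outside $J$ into the support of $\hat\bw$. So the inequality is strict.

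For item (4), take the same positive left null vector $\bz^\top$, so $\bz^\top A=0$ and $\bz>0$. If $A\bx=\by\ge 0$, then $\bz^\top\by=\bz^\top A\bx=0$. Since $\bz>0$ and $\by\ge 0$, this forces $\by=0$.

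The main obstacle is item (3), specifically the strict inequality $\rho(B_J)<\rho(B)$; everything else is a direct reading of Perron--Frobenius.
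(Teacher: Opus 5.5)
Your proof is correct. The paper gives no proof of its own; it quotes the result from Berman--Plemmons \cite[Chapter 6]{bp:book}, where it is obtained by the same Perron--Frobenius reasoning you use: a simple Perron root with positive eigenvector for (1), (2), (5), strict decrease of the spectral radius on proper principal submatrices of an irreducible $B$ for (3), and a positive left null vector for (4).

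One step should be stated more carefully, at the end of (3). Irreducibility directly gives $b_{ij}>0$ with $j\in S:=\mathrm{supp}(\hat\bw)$ and $i\notin S$, not necessarily $i\notin J$. You need to add that $i\in J\setminus S$ is impossible, because then $0=\rho(B_J)w_i=(B_J\bw)_i\ge b_{ij}w_j>0$. So $i\notin J$, and the $i$th row of $B\hat\bw=\rho(B_J)\hat\bw$ gives the contradiction. Equivalently, a nonnegative eigenvector of an irreducible nonnegative matrix must be positive, which $\hat\bw$ is not.
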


\subsection{Triplet representations of an M-matrix}
\label{sec:trip}

A {\em triplet representation} of the M-matrix $A$, is a triple $(P,\bsu,\bv)$, where $P\in\mathbb R^{n\times n}$ is the nonnegative matrix such that $p_{ij} = -a_{ij}$ for $i\ne j$ and $p_{ij}=0$ for $i=j$, while $\bsu\in\mathbb{R}^n$ and $\bv\in\mathbb{R}^n$  are such that $A\bsu=\bv$, and $\bsu>0$, $\bv\ge 0$. 

For the triplet representation, we will use the notation $(-\offdiag(A),\bsu,\bv$), where $-\offdiag(A)$ is the matrix that coincides with $-A$ in the off-diagonal entries and has null entries on the diagonal.

If  $A$ is invertible or singular irreducible, then by Theorem \ref{2} and \ref{3} a triplet 
representation $(P,\bsu,\bv)$ exists. If  $A$ is singular and irreducible, then a triplet representation is such that $\bv=0$.

The triplet representation may not be unique, and the entries of the matrix $A$ can be easily obtained from any triplet representation, without subtractions of nonnegative numbers, by means of the vector
$\by=P\bsu$ and the
simple formula
\begin{equation}\label{eq:diag}
a_{ij}=\left\{\begin{array}{ll} -p_{ij}, &\hbox{if }i\ne j,\\[1ex]
\displaystyle{\frac{v_i+y_i}{u_i}},& \hbox{if }i= j.
\end{array}\right.
\end{equation}

In some applications, the triplet is available or can be computed exactly. For instance, for the Laplacian $A$ of a network, we know that $A{\ones} =0$, where $\ones=[\begin{array}{cccc}1&1&\ldots&1\end{array}]^\top$, or, for the discretization of the Laplace operator on a segment with central differences and zero boundary conditions, we have $A{\bf 1} = \begin{bmatrix} 1 & 0 & \cdots & 0 & 1\end{bmatrix}^\top $. 

A singular reducible M-matrix can have no triplet representation. An example is
$A=\left[  \begin{smallmatrix}
    0 & -1\\0 & 0
    \end{smallmatrix}\right].
$
Note that in this case $0$ is not semisimple, and indeed, we will show that a necessary condition for a singular matrix $A$ to possess a triplet representation is that 0 is a semisimple eigenvalue.

Apparently, the existence of a triplet cannot be related to a spectral property; indeed, the two singular reducible M-matrices
\begin{equation}\label{eq:A1A2}
    A_1=\begin{bmatrix}
    1 & -1\\0 & 0
    \end{bmatrix},\qquad
    A_2=\begin{bmatrix}
    1 & 0\\ -1 & 0
    \end{bmatrix},
\end{equation}
have the same spectral properties, in particular $0$ is simple, but, while $A_1$ has the triplet representation
$
    P = \left[\begin{smallmatrix} 
    0 & 1\\ 0 & 0
    \end{smallmatrix}\right],~
    \bsu = \left[\begin{smallmatrix} 1\\1\end{smallmatrix}\right],~
    \bv = \left[\begin{smallmatrix} 0\\0\end{smallmatrix}\right]
$,
the matrix $A_2$ has no triplet representation.

A necessary and sufficient condition for the existence of the triplet can be related to the Frobenius normal form of $A$, that is a matrix of the form
\begin{equation}\label{eq:1}
    B=\Pi^\top  A \Pi = \begin{bmatrix}
    B_{11} & B_{12} & \cdots & B_{1\ell}\\
    & B_{22} & \cdots & B_{2\ell}\\
    & & \ddots & \vdots\\
    & & & B_{\ell\ell}\end{bmatrix},
\end{equation}
where $\Pi$ is a permutation matrix and the diagonal blocks are irreducible matrices. A diagonal block can be  $1\times 1$ zero block that for us is an irreducible matrix.

The following result can be deduced from \cite[Thm. 4]{01} and can be found also in \cite[Thm. 1-(ii)]{02}.

\begin{theorem}\label{thm:exist}
Let $A\in\mathbb R^{n\times n}$ be an M-matrix. The matrix $A$ has a triplet representation if and only if its Frobenius normal form \eqref{eq:1}
is such that if a diagonal block, say $B_{ii}$, is singular, then the off-diagonal blocks $B_{ij}$ for $j\ne i$, are zero.
\end{theorem}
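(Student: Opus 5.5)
We first reduce to the Frobenius normal form. If $(P,\bsu,\bv)$ is a triplet for $A$ and $\Pi$ is a permutation matrix, then $(\Pi^\top P\Pi,\Pi^\top\bsu,\Pi^\top\bv)$ is a triplet for $\Pi^\top A\Pi$, since $\Pi^\top\bsu>0$ and $\Pi^\top\bv\ge0$. The converse also holds, so we may assume $A=B$ as in \eqref{eq:1}. Partition $\bsu$ and $\bv$ conformally into blocks $\bsu_i,\bv_i$. The identity $B\bsu=\bv$ then reads, block row by block row,
\[
B_{ii}\bsu_i=\bv_i-\sum_{j>i}B_{ij}\bsu_j=\bv_i+\sum_{j>i}|B_{ij}|\bsu_j,
\]
where we used that the off-diagonal blocks $B_{ij}$ are nonpositive.

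\emph{Necessity.} Suppose $B_{ii}$ is singular. It is irreducible and, by Theorem \ref{1}(1), an M-matrix. Write $\bw:=\bv_i+\sum_{j>i}|B_{ij}|\bsu_j\ge0$, so that $B_{ii}\bsu_i=\bw\ge0$. By Theorem \ref{3}(4) this forces $B_{ii}\bsu_i=0$, hence $\bw=0$. Each term of $\bw$ is nonnegative, so $|B_{ij}|\bsu_j=0$ for every $j>i$. Since $\bsu_j>0$, this gives $B_{ij}=0$ for $j\neq i$, which is the claimed condition. For the $1\times1$ zero block the statement of Theorem \ref{3}(4) is trivially true, so this case needs no separate treatment.

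\emph{Sufficiency.} We construct $\bsu_i$ by backward recursion, $i=\ell,\ell-1,\dots,1$.
\begin{itemize}
\item If $B_{ii}$ is singular, the hypothesis says its row of off-diagonal blocks vanishes. Theorem \ref{3}(1) gives $\bsu_i>0$ with $B_{ii}\bsu_i=0$ (for a zero $1\times1$ block, any positive scalar works), and we set $\bv_i=0$.
\item If $B_{ii}$ is nonsingular, the vectors $\bsu_j$ for $j>i$ are already defined and positive. Put $\bz=\sum_{j>i}|B_{ij}|\bsu_j\ge0$. We need $\bsu_i>0$ with $B_{ii}\bsu_i\ge -\bz$. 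By Theorem \ref{2}(1) there is $\bsu_i>0$ with $B_{ii}\bsu_i\ge0$, and this suffices. Then $\bv_i:=B_{ii}\bsu_i+\bz\ge0$.
\end{itemize}
With these choices $B\bsu=\bv$, $\bsu>0$ and $\bv\ge0$, which gives a triplet. Alternatively, $\bsu_i=B_{ii}^{-1}(\bz+\bm e)$ with $\bm e=\ones$ is positive by Theorem \ref{2}(2) and is also a valid choice.

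The main point of care is the necessity argument: it must use Theorem \ref{3}(4), which relies on irreducibility of the diagonal block, to rule out a strictly positive contribution from $\bz$. The rest is bookkeeping of signs.
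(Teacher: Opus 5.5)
Your necessity argument is correct. The block-row identity together with Theorem~\ref{3}(4), which comes from a positive left null vector of the irreducible singular block, forces $\bv_i=0$ and $|B_{ij}|\bsu_j=0$, hence $B_{ij}=0$.

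The sufficiency step for a nonsingular block, however, has a sign error that breaks your main construction. By your own displayed identity, block row $i$ of $B\bsu=\bv$ reads $\bv_i=B_{ii}\bsu_i+\sum_{j>i}B_{ij}\bsu_j=B_{ii}\bsu_i-\bz$, not $B_{ii}\bsu_i+\bz$. So you need $B_{ii}\bsu_i\ge\bz$, not $B_{ii}\bsu_i\ge-\bz$. An arbitrary vector from Theorem~\ref{2}(1), which only guarantees $B_{ii}\bsu_i\ge 0$, need not work. Also, with $\bv_i:=B_{ii}\bsu_i+\bz$ the claimed equality $B\bsu=\bv$ is false as soon as $\bz\neq 0$. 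The matrix $A_1$ of \eqref{eq:A1A2} already shows this. With $u_2=1$ you get $z=1$, and $u_1=1/2$ satisfies $B_{11}u_1\ge 0$, but $(A_1\bsu)_1=1/2-1=-1/2<0$.

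The repair is the choice you list only as an ``alternative'': $\bsu_i=B_{ii}^{-1}(\bz+\ones)$. It is positive, since $B_{ii}^{-1}>0$ by irreducibility and Theorem~\ref{2}(2) and $\bz+\ones>0$. It gives $\bv_i=B_{ii}\bsu_i-\bz=\ones\ge 0$. Make this the construction and define $\bv_i=B_{ii}\bsu_i-\bz$; the backward recursion then produces a genuine triplet. Equivalently, you could take any $\bsu_i>0$ with $B_{ii}\bsu_i>0$ and scale it up until it dominates $\bz$.

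With that fix your argument is complete and self-contained. The paper gives no proof of its own; it imports the result from \cite[Thm.~4]{01} and \cite[Thm.~1-(ii)]{02}. Your route, block back-substitution for sufficiency and Theorem~\ref{3}(4) for necessity, has the advantage of using only the M-matrix facts of Theorems~\ref{1}--\ref{3} that the paper already recalls.
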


Theorem \ref{thm:exist} and the fact that by Theorem \ref{1} the diagonal submatrices of an M-matrix are M-matrices as well imply immediately the following result (whose first part has also been proved in \cite[Lemma 4]{guo2}).

\begin{corollary}\label{cor:1}
If $A\in\mathbb R^{n\times n}$ is an M-matrix with triplet representation, then $A$ is either nonsingular, or $0$ is a semisimple eigenvalue.
Moreover, there exists a permutation matrix $\Pi$ such that
\begin{equation}\label{eq:2}
\Pi^\top A\Pi = \begin{bmatrix}
A_{11} & A_{12}\\
0 & A_{22}
\end{bmatrix},\qquad
A_{22} = \begin{bmatrix}
A_{22}^{(1)}\\
& \ddots \\
& & A_{22}^{(\nu)}
\end{bmatrix},
\end{equation}
where $A_{11}$ is a nonsingular (or empty) M-matrix and $A_{22}$ is block diagonal with irreducibile singular diagonal blocks $A_{22}^{(1)},\ldots,A_{22}^{(\nu)}$, that are M-matrices.
\end{corollary}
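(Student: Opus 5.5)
Starting from Theorem \ref{thm:exist}, I write the Frobenius normal form \eqref{eq:1} as $B=\Pi^\top A\Pi$ with irreducible diagonal blocks $B_{11},\ldots,B_{\ell\ell}$. By Theorem \ref{1}(1) and (3), $B$ and every diagonal block $B_{ii}$ are M-matrices. Let $S\subseteq\{1,\ldots,\ell\}$ be the indices of the singular blocks. Theorem \ref{thm:exist} says that for $i\in S$ the block row of $i$ has zero off-diagonal blocks, i.e. $B_{ij}=0$ for all $j\ne i$.

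The plan is to apply a further block permutation that moves the singular blocks to the end while keeping the relative order of the nonsingular ones. Let $N$ be the nonsingular indices in increasing order and $S$ the singular ones. The new matrix has leading part $A_{11}=(B_{ij})_{i,j\in N}$. Since $B$ is block upper triangular and the order within $N$ is preserved, $A_{11}$ is block upper triangular. Its diagonal blocks are nonsingular, so $A_{11}$ is nonsingular. It is a principal submatrix of an M-matrix, so it is an M-matrix.

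Next I treat the remaining blocks:
\begin{itemize}
\item The lower-left block $(B_{ij})_{i\in S,\,j\in N}$ is zero, because the block rows of singular blocks vanish off the diagonal.
\item The trailing block $A_{22}=(B_{ij})_{i,j\in S}$ is block diagonal, for the same reason.
\item Its diagonal blocks are the irreducible singular M-matrices $B_{ii}$, $i\in S$.
\end{itemize}
The upper-right block $A_{12}$ is arbitrary. The composition of the two permutations is again a permutation $\Pi$, which gives \eqref{eq:2}.

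For semisimplicity of $0$ I use the structure just obtained. Each singular irreducible block has rank $n_i-1$ by Theorem \ref{3}(2), so $0$ is a simple eigenvalue of it. Hence $0$ is a semisimple eigenvalue of the block diagonal $A_{22}$. Since $A_{11}$ is nonsingular, I can take $X=A_{11}^{-1}A_{12}$, which solves the Sylvester equation $A_{11}X-XA_{22}=-A_{12}$ restricted to the kernel part. A cleaner route is to note directly that $\operatorname{rank}(\Pi^\top A\Pi)^2=\operatorname{rank}(\Pi^\top A\Pi)$:
\[
\begin{bmatrix}A_{11}&A_{12}\\0&A_{22}\end{bmatrix}^2=\begin{bmatrix}A_{11}^2&A_{11}A_{12}+A_{12}A_{22}\\0&A_{22}^2\end{bmatrix}.
\]
Left-multiplying by $\mathrm{diag}(A_{11}^{-1},I)$ and using a block column elimination reduces both ranks to $n_1+\operatorname{rank}A_{22}$, given that $\operatorname{rank}A_{22}^2=\operatorname{rank}A_{22}$. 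Therefore the index of $0$ is at most one, which is semisimplicity. If $S$ is empty, $A$ is nonsingular.

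I expect the main obstacle to be this rank (index) argument when the coupling block $A_{12}$ is present. The elimination must be carried out carefully using the invertibility of $A_{11}$. Concretely, the block column operation subtracts $A_{11}^{-1}$ times the first block column (after scaling) to clear the $(1,2)$ block. Everything else follows from Theorem \ref{thm:exist} and Theorem \ref{1}.
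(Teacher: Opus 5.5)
Your proof is correct and follows the paper's route: the paper simply asserts that the corollary is immediate from Theorem~\ref{thm:exist} and Theorem~\ref{1}. Your reordering of the Frobenius blocks (nonsingular first, singular last), together with the identity $\operatorname{rank}M^2=\operatorname{rank}M$ for $M=\Pi^\top A\Pi$, just spells out that ``immediately''.

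One justification needs fixing. Rank $n_i-1$ from Theorem~\ref{3}(2) gives only geometric multiplicity one, not simplicity of $0$; a $2\times 2$ nilpotent Jordan block has rank one. Cite Theorem~\ref{3}(5) instead (Property~C, i.e.\ $0$ semisimple), or Perron--Frobenius, to obtain $\operatorname{rank}(A_{22}^{(i)})^2=\operatorname{rank}A_{22}^{(i)}$. Also drop the Sylvester-equation aside, which is unnecessary and not correct as written.
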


Corollary \ref{cor:1} provides only a necessary condition. Indeed, the set of M-matrices with a triplet is a strict subset of M-matrices with  Property C (nonsingular or with a semisimple $0$ eigenvalue). For instance, the matrix $A_2$ in \eqref{eq:A1A2}, lacking a triplet representation, has  Property C, because its eigenvalues are $0$ (semisimple) and $1$.

In summary, we get the following strict inclusions
\[
    \mathcal M_1\subset \mathcal M_2\subset \mathcal M_3\subset \mathcal M_4,
\]
where $\mathcal M_4$ is the set of  M-matrices, $\mathcal M_3$ is the set of M-matrices with  Property~C, $\mathcal M_2$ is the set of M-matrices with triplet and $\mathcal M_1$ is the set of nonsingular or singular irreducible M-matrices.
The matrices in $\mathcal M_2$ are also called regular in \cite{guo1,guo2} and matrices with property $\mathcal W\mathcal S$ in \cite{02}.

The LU factorization of an M-matrix with triplet representation can be computed by means of a component-wise numerically stable algorithm, based on the extension of the GTH algorithm \cite{alfa}, as in the version described in \cite{poloni15}.
If $A$ is a nonsingular M-matrix and $B\in\mathbb{R}^{n\times m}$, $B\ge 0$, then   also the solution of the system
$AX=B$ can be computed in a component-wise numerically stable way, relying on the $LU$ factorization of $A$. The overall procedure is reported in Algorithm~\ref{alg:gth}. Note that the number of arithmetic operations (ops) needed for computing the LU factorization is $\frac{2}{3}n^3+o(n^3)$, that is asymptotically equivalent to the number of ops needed by Gaussian elimination.

\begin{algorithm}
\sml
\textbf{Input}:
 $B\in\mathbb R^{n\times m}$, $B\ge 0$, and a triplet representation $(-\offdiag(A), \bv, \bw)$ of the matrix $A\in\mathbb R^{n\times n}$.

\textbf{Output}: $X = A^{-1} B$.

Set $L=I$, $U=O$, $\offdiag(U)=\offdiag(A)$\;

\For{$\ell=1,2,\dots,n-1$}{

$U_{\ell,\ell} = (w_\ell - U_{\ell,\ell+1:n} v_{\ell+1:n} )/v_\ell$\;

$L_{\ell+1:n,\ell} = U_{\ell+1:n,\ell} /U_{\ell,\ell}$\;

$w_{\ell+1:n} = w_{\ell+1:n} - L_{\ell+1:n,\ell} w_\ell$\;

$\offdiag(U_{\ell+1:n,\ell+1:n}) = \offdiag(U_{\ell+1:n,\ell+1:n} - L_{\ell+1:n,\ell} U_{\ell,\ell+1:n} )$\;
}
$U_{n,n}=w_n/v_n$\;
Compute $Y = L^{-1} B$ by forward substitution\;
Compute $X = U^{-1}Y$ by back substitution.

\caption{\sml GTH-like algorithm for solving the system $AX = B$, where $A\in\mathbb R^{n\times n}$ is a nonsingular M-matrix,  $X,B\in\mathbb R^{n\times m}$, and $B\ge 0$} \label{alg:gth}
\end{algorithm}

\section{Properties of the square root of an M-matrix}
\label{sec:sqrt}

The square roots of the matrix $A$ are the solutions of the equation $X^2=A$. There can be finitely many, infinitely many or no square roots of a matrix $A$. These are usually distinguished in primary square roots that can be written as a polynomial in $A$ and nonprimary square roots that cannot.

We recall and synthesize some results on the analysis of the existence of square roots of M-matrices. A classic reference is \cite{as82} where the existence is related to Property C (see Section \ref{sec:prel}).

The following lemma recalls some known properties.

\begin{lemma}\label{thm:1}
If $A$ has no negative real eigenvalues and is nonsingular or singular with semisimple $0$ eigenvalue, then $A$ has only one primary square root with nonzero eigenvalues in the open right half plane, that is the principal square root, denoted by $A^{1/2}$. If $A$ has a defective $0$ eigenvalue, then there exists no primary square root. 
\end{lemma}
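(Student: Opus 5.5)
The plan is to use the standard definition of a primary matrix function through the Jordan canonical form, equivalently through Hermite interpolation on the spectrum. A primary square root of $A$ is $f(A)$, where $f$ is a branch of the square root chosen on each eigenvalue. For $f(A)$ to be defined, $f$ must be differentiable up to order $k-1$ at each eigenvalue $\lambda$ whose largest Jordan block has size $k$. Write $A=ZJZ^{-1}$ with $J=\mathrm{diag}(J_{k_1}(\lambda_1),\dots,J_{k_p}(\lambda_p))$. A primary square root is then $Z\,\mathrm{diag}(f_{j}(J_{k_j}(\lambda_j)))Z^{-1}$, and the same branch must be used for all blocks sharing an eigenvalue, which is what makes it a polynomial in $A$.

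First, let $\lambda\neq0$. The function $z\mapsto z^{1/2}$ is analytic near $\lambda$ and has exactly two branches there. Since $A$ has no eigenvalues on $(-\infty,0)$, exactly one branch has values in the open right half plane, namely the principal branch, whose cut lies along the negative real axis. So the requirement ``nonzero eigenvalues in the open right half plane'' fixes the branch $\sqrt{\lambda}$ with $\mathrm{Re}\sqrt{\lambda}>0$ uniquely for every nonzero eigenvalue.

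Next, take the eigenvalue $0$. If it is semisimple, all its Jordan blocks are $1\times1$, so only the value $f(0)$ is needed. The square root has the single value $f(0)=0$, and no derivatives are required. Hence $f$ is well defined on the spectrum, $X=f(A)$ exists, and $X^2=A$ because $f(z)^2=z$ on the spectrum. Uniqueness holds because a primary function is determined by the values of $f$ (and its needed derivatives) on the spectrum, and these were forced above. Equivalently, $X=p(A)$ where $p$ is the Hermite interpolating polynomial of these data. Its eigenvalues are $f(\lambda_j)$, so the nonzero ones lie in the open right half plane. The nonsingular case is the same argument with no zero eigenvalue.

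For the last claim, suppose $0$ is defective, i.e.\ some Jordan block $J_k(0)$ has $k\ge2$. Any primary square root is $X=p(A)$ for a polynomial $p$, so on the block $J_k(0)$ it acts as $p(J_k(0))$. This is upper triangular Toeplitz with diagonal $p(0)$ and superdiagonal $p'(0)$. Squaring it must give $J_k(0)$, so $p(0)^2=0$ and $2p(0)p'(0)=1$, which is impossible. Equivalently, $z^{1/2}$ is not differentiable at $0$.

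The main obstacle is the uniqueness among \emph{primary} roots. One must be careful that the primary roots are exactly the $f(A)$ with a consistent branch choice per distinct eigenvalue. This is standard (Higham, \emph{Functions of Matrices}, Thm.~1.26), and I would cite it rather than reprove it.
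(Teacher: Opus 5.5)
Your proof is correct and follows essentially the paper's route. The spectral condition forces the principal branch at every nonzero eigenvalue, and existence and uniqueness then come from the standard theory of primary matrix functions (the paper cites a different reference for this step). Your Jordan-block computation ($p(0)^2=0$ versus $2p(0)p'(0)=1$) is the same obstruction the paper phrases as $\mu(z)\mid p(z)^2-z$ forcing $z\mid p(z)$, hence $z^2\nmid\mu(z)$.

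One point to tidy: Higham's Thm.~1.26 is stated only for nonsingular $A$. For the singular semisimple case, add the one-line check that any polynomial $p$ with $p(A)^2=A$ has $p(0)=0$. At each nonzero eigenvalue $\lambda$, $p$ agrees to the needed order with the branch $g$ of $\sqrt{z}$ satisfying $g(\lambda)=p(\lambda)$. This holds because $(p-g)(p+g)=p^2-z$ vanishes to the block order at $\lambda$ and $(p+g)(\lambda)=2p(\lambda)\neq 0$.
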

\begin{proof}
Let $\lambda_1,\ldots,\lambda_n$ be the eigenvalues of $A$. The eigenvalues of a solution $X$ are square roots of the eigenvalues of $A$, and, since $A$ has no negative real eigenvalues, for any $\lambda_i\ne 0$, there is a unique square root in the open right half plane, namely $\lambda_i^{1/2}$.
The problem is thus reduced to the existence and uniqueness of a primary square root with eigenvalues $\lambda_1^{1/2},\ldots,\lambda_n^{1/2}$, that follows from \cite[Thm. 6.4]{u}, where the condition holds for a matrix with semisimple $0$ eigenvalues, with $f(x)=x^2$.
For the second part, we show that if there exists a primary square root, then 0 cannot be a defective eigenvalue.
If $X$ is a primary square root, then there exists a polynomial $p(x)$ such that $X=p(A)$ and $X^2=p(A)^2=A$. This implies that the minimal polynomial $\mu(z)$ of $A$ divides $p(z)^2-z$. If $X$ is singular then $z|\mu(z)$ and thus $z|p(z)$ that implies $z^2\not|\mu(z)$. This proves that a primary square root can exist only if $0$ is semisimple.
\end{proof}

In the sequel, the primary square root 
is denoted by $A^{1/2}$. In the following theorem, we recall the results that clarify the connections between $A^{1/2}$ and the M-matrices that solve the equation $X^2=A$.  

\begin{theorem}\label{th:abc}
Let $A$ be an M-matrix.
\begin{itemize}
\item[(a)] If $A$ is nonsingular or $0$ is a semisimple eigenvalue, then $X=A^{1/2}$ is the unique M-matrix that is a primary square root of $A$. If $A$ is nonsingular or $0$ is a simple eigenvalue, then $X=A^{1/2}$ is the unique M-matrix square root of $A$.
\item[(b)] If $A$ is singular with defective $0$ eigenvalue then there is no M-matrix square root of $A$.
\end{itemize}
\end{theorem}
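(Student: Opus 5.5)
Write $A=\alpha I-B$ with $B\ge 0$ and $\alpha\ge\rho(B)$. I will use the scaling $A^{1/2}=\sqrt{\alpha}\,(I-C)^{1/2}$ with $C=B/\alpha$. The natural tool is the binomial series
\[
(I-C)^{1/2}=I-\sum_{k\ge1} c_k C^k,\qquad c_k>0,\qquad \sum_{k\ge 1}c_k=1 .
\]
Since every $c_k$ is positive, the tail $S=\sum_k c_kC^k$ is nonnegative whenever the series converges. Moreover $\rho(S)\le\sum_k c_k\rho(C)^k\le 1$, so $\sqrt{\alpha}(I-S)$ would be an M-matrix.

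\textbf{Part (a), existence.} Assume $A$ is nonsingular or has a semisimple zero eigenvalue. Then Lemma~\ref{thm:1} applies: by Theorem~\ref{1}(2) the nonzero eigenvalues of $A$ lie in the open right half plane, so $A$ has no negative real eigenvalues, and therefore $A^{1/2}$ exists. I must show that $A^{1/2}$ is an M-matrix.
\begin{itemize}
\item If $\rho(C)<1$, the series converges and the function-of-matrix calculus identifies its sum with the principal square root.
\item If $\rho(C)=1$, I replace $\alpha$ by $\alpha+\varepsilon$ and let $\varepsilon\to0$. The continuity of the principal square root on matrices with semisimple zero eigenvalue (in the primary function sense) is the delicate point. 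A cleaner route is to shift $\alpha$ upward from the start so that every eigenvalue of $C$ other than $1$ has modulus $<1$ (Property C, which is equivalent to semisimplicity of $0$ by the cited result). On the eigenvalue $1$ the scalar series $\sum_k c_k=1$ converges. Because $1$ is semisimple, the component of $C^k$ on that eigenspace stays bounded, so the matrix series converges, and its limit is a primary function of $A$ squaring to $A$.
\end{itemize}
Its eigenvalues are the principal roots $\lambda_i^{1/2}$, with $0$ mapped to $0$. Hence the limit is $A^{1/2}$, and it is an M-matrix by the spectral-radius bound above.

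\textbf{Part (a), uniqueness.} Uniqueness among primary square roots comes directly from Lemma~\ref{thm:1}. For the stronger claim when $0$ is at most simple, let $X$ be any M-matrix with $X^2=A$.
\begin{itemize}
\item By Theorem~\ref{1}(2), the eigenvalues of $X$ are $0$ or lie in the open right half plane. So each eigenvalue $\mu$ of $X$ is the principal root of $\mu^2$, an eigenvalue of $A$.
\item The map $\mu\mapsto\mu^2$ is injective on the closed right half plane minus the imaginary axis, together with $0$.
\item If $0$ is not a double eigenvalue of $X$, then $\mu^2$ distinct for distinct eigenvalues is exactly the condition making $X$ a primary function of $X^2=A$ (by Higham's classification, a square root is primary unless it takes different roots on equal eigenvalues). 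When $A$ is nonsingular or $0$ is simple, $X$ is therefore primary, and it equals $A^{1/2}$.
\end{itemize}

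\textbf{Part (b).} Suppose $X$ is an M-matrix with $X^2=A$ and $0$ a defective eigenvalue of $A$. Write $X=\beta I-D$. I aim to show that a zero eigenvalue of $X$ cannot have a Jordan block of size at least $3$, which I expect to be the main obstacle. I would argue as follows.
\begin{itemize}
\item An M-matrix $X$ has index of $0$ at most... this is false in general, since nilpotent M-matrices such as the $n\times n$ Jordan block $-J$ have $-J$ with $J\ge0$. So I would instead use the Frobenius normal form.
\item The singular irreducible diagonal blocks of $X$ have simple $0$ eigenvalues by Theorem~\ref{3}.
\item For $A=X^2$, the Jordan chains at $0$ arise from chains of singular blocks of $X$ linked through off-diagonal blocks. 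Squaring keeps any chain of length at least $2$ at length at least $2$ in $A$ only when the chain is long.
\end{itemize}
Given the difficulty of this step, I would most likely fall back on the literature. The paper cites \cite{as82}, which proves that an M-matrix has an M-matrix square root if and only if it has Property C. Part (b) follows immediately, since a defective zero eigenvalue violates Property C.
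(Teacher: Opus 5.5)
Your proposal is correct. For part (b) it ends up exactly where the paper does: the paper's proof of (b) is just the citation of \cite[Thm.~4]{as82}, combined with the fact that a defective zero eigenvalue rules out Property C. You can therefore drop your tentative Frobenius-form sketch; as written, its last bullet is not an argument.

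For part (a) your route is genuinely different and more self-contained. The paper gets uniqueness among primary M-matrix roots from Lemma~\ref{thm:1} and Theorem~\ref{1}(2), as you do. But it simply asserts that $A^{1/2}$ is an M-matrix, and it obtains uniqueness among \emph{all} M-matrix square roots (for $A$ nonsingular or with simple $0$) by citing \cite{as82}. You prove both facts yourself.
\begin{itemize}
\item For the M-matrix property you use the binomial series. You handle the singular case by choosing $\alpha$ so that $1$ is the only eigenvalue of $C$ on the unit circle; this is possible directly, since $|1-\lambda/\alpha|<1$ once $\alpha>|\lambda|^2/(2\,\mathrm{Re}\,\lambda)$. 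Semisimplicity then gives $C^k=E+R^k$ with $\rho(R)<1$. This yields convergence, nonnegativity of the tail and the spectral radius bound. The limit lies in the closed algebra of polynomials in $A$, so it is primary and equals $A^{1/2}$.
\item For the second uniqueness claim you argue spectrally: the eigenvalues of any M-matrix root lie in the open right half plane or at $0$, and squaring is injective there.
\end{itemize}
Your approach gives an essentially elementary proof of (a), needing only standard functional calculus. The paper's proof is shorter but rests entirely on \cite{as82}.

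One point to tighten: Higham's classification of square roots (primary versus nonprimary according to branch choices on equal eigenvalues) is stated for nonsingular matrices. For $A$ with simple $0$ you need a reduction.
\begin{itemize}
\item $X$ commutes with $A$, so it preserves the one-dimensional $\ker A$ and the complementary invariant subspace $\operatorname{range} A$.
\item On $\ker A$, $X$ acts as $0$, because its restriction squares to $0$.
\item On $\operatorname{range} A$, $X$ is a square root of a nonsingular matrix with all eigenvalues in the open right half plane, so the nonsingular classification makes it the principal root.
\end{itemize}
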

\begin{proof}
(a) In the stated hypotheses, by Lemma \ref{thm:1}, the primary square root $A^{1/2}$ exists and is the unique primary square root with nonzero eigenvalues in the  open right half plane, then no other primary square root can be an M-matrix (since by Theorem \ref{1} the nonzero eigenvalues of an M-matrix belong to the right half plane). The fact that $A^{1/2}$ is an M-matrix concludes the proof.
If $A$ is nonsingular or $0$ is a simple eigenvalue, then it has property C, and by \cite{as82} we know that there exists a unique M-matrix square root, which must be $A^{1/2}$.

(b) This follows from \cite[Thm. 4]{as82}, since a matrix with defective $0$ eigenvalue has no Property C and thus it has no M-matrix square root.
\end{proof}

In case (a) of Theorem \ref{th:abc}, there may exist other M-matrices that are nonprimary square roots of $A$.
For instance, for the $0$ matrix, there exists just one M-matrix that is a primary matrix root, namely the $0$ matrix; while there are infinitely many M-matrices that are nonprimary matrix square roots, namely
$
  \left[\begin{smallmatrix} 0 & -a\\0 & 0\end{smallmatrix}\right],~
  \left[\begin{smallmatrix} 0 & 0\\-a & 0\end{smallmatrix}\right]~ a>0.
$

Corollary~\ref{cor:1} shows that if $A\in\mathbb R^{n\times n}$ is an M-matrix with triplet, then the matrix satisfies  Property C and thus by Lemma \ref{thm:1} there exists a principal square root. We prove that this matrix has a triplet representation. For an invertible matrix $A$ with triplet $(P,\bsu,\bv)$, there is an easy proof, since $A^{1/2}\bsu = A^{-1/2}A\bsu=A^{-1/2}\bv\ge 0$, so that $A^{1/2}$ has the triplets $(-\offdiag(A^{1/2}),\bsu,A^{-1/2}\bv)$ and $(-\offdiag(A^{1/2}),A^{1/2}\bsu,\bv)$. While for the case in which $A$ is not invertible, we need the following properties of primary matrix functions.
\begin{lemma}\label{lemma}
Let $A\in\mathbb C^{n\times n}$ and let $f$ be a function defined at the spectrum of $A$ and analytic at the defective eigenvalues of $A$.
\begin{itemize}
\item[(a)] For $M\in\mathbb C^{n\times n}$ invertible, we have $f(M^{-1}AM)=M^{-1}f(A)M$.
\item[(b)] Let $A=[A_{ij}]_{i,j=1,\ldots,\nu}$ be block upper triangular ($A_{ij}=0$ for $i>j$) [resp. block diagonal ($A_{ij}=0$ for $i\ne j$)]. Let $f(A)=[F_{ij}]_{i,j=1,\ldots,\nu}$ be the block partitioning of $f(A)$ performed with the same block partition as $A$. Then $f(A)$ is block upper triangular [resp. block diagonal] and $F_{ii}=f(A_{ii})$ for $i=1,\ldots,\nu$.
\item[(c)] If $A$ is reducible then $f(A)$ is reducible.
\end{itemize}
\end{lemma}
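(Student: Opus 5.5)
The plan is to work from the definition of a primary matrix function through Hermite interpolation: $f(A)=p(A)$, where $p$ is a polynomial that interpolates $f$ and its derivatives on the spectrum of $A$, up to the index of each eigenvalue. The derivative orders involved exceed the zeroth only at defective eigenvalues, and there the hypothesis that $f$ is analytic makes them meaningful. The key observation is that one single polynomial $p$ can serve for every matrix whose spectral data (eigenvalues together with their indices) is dominated by that of $A$.

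For (a), I will note that $M^{-1}AM$ has the same minimal polynomial as $A$. Hence the same $p$ interpolates $f$ on its spectrum, and $f(M^{-1}AM)=p(M^{-1}AM)=M^{-1}p(A)M=M^{-1}f(A)M$, since polynomials commute with similarity.

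For (b), I will take $p$ interpolating $f$ on the spectrum of $A$. Each diagonal block $A_{ii}$ has eigenvalues among those of $A$, and its minimal polynomial divides that of $A$, because $\mu_A(A)$ is block upper triangular with diagonal blocks $\mu_A(A_{ii})$, which must vanish. Therefore $p$ also satisfies the interpolation conditions for $A_{ii}$, giving $f(A_{ii})=p(A_{ii})$. Since products and sums of block upper triangular (resp. block diagonal) matrices keep that structure, with diagonal blocks obtained from the diagonal blocks, $p(A)$ is block upper triangular (resp. block diagonal) with diagonal blocks $p(A_{ii})=f(A_{ii})$. This proves both the structure claim and $F_{ii}=f(A_{ii})$.

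For (c), reducibility means there is a permutation $\Pi$ with $\Pi^\top A\Pi$ block upper triangular with at least two nontrivial diagonal blocks. By (a) with $M=\Pi$, we get $f(\Pi^\top A\Pi)=\Pi^\top f(A)\Pi$. By (b) this matrix is block upper triangular with the same partition, so $f(A)$ is reducible.

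The only delicate point is the divisibility of minimal polynomials in (b): the interpolation conditions for $A_{ii}$ must be implied by those for $A$. This needs the index of each eigenvalue in $A_{ii}$ to be at most its index in $A$, which is exactly the statement $\mu_{A_{ii}}\mid\mu_A$. Everything else is routine.
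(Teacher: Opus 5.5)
Your proposal is correct and follows the paper's route. The paper simply cites Higham's Theorem 1.13(c),(f) for (a) and (b), and those results rest on exactly the Hermite-interpolation argument you give; your check that $\mu_{A_{ii}}\mid\mu_A$, so that eigenvalue indices in each $A_{ii}$ do not exceed those in $A$, is what justifies that $p$ interpolates $f$ on the spectrum of each diagonal block. Part (c) is then derived from (a) with $M=\Pi$ and from (b), exactly as in the paper.
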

\begin{proof}
(a) is \cite[Thm. 1.13-c]{higham:book} and (b) is \cite[Thm. 1.13-f]{higham:book}.

(c) If $T:=\Pi^\top A\Pi$ is block upper triangular with nontrivial blocking for a permutation matrix $\Pi$, by (b) the matrix $f(T)$ is block upper triangular with nontrivial blocking and since by (a) one obtains $\Pi^\top f(A)\Pi=f(\Pi^\top A\Pi)=f(T)$, we deduce that $f(A)$ is reducible.
\end{proof}

Now we are ready to prove the main result of this section.

\begin{theorem}\label{thm:triplsqrt}
Let $A\in\mathbb R^{n\times n}$ be an M-matrix with triplet representation. Then there exists the principal square root of $A$, and it is an M-matrix with triplet representation.
\end{theorem}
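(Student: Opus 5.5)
By Corollary~\ref{cor:1}, $A$ is either nonsingular or singular with semisimple $0$ eigenvalue. Since $A$ is an M-matrix, by Theorem~\ref{1} its nonzero eigenvalues lie in the open right half plane, so $A$ has no negative real eigenvalues. Hence Lemma~\ref{thm:1} yields the existence of $A^{1/2}$, and Theorem~\ref{th:abc}(a) shows it is an M-matrix. It remains to exhibit a triplet for $X:=A^{1/2}$. The nonsingular case was already settled above with the triplet $(-\offdiag(X),\bsu,A^{-1/2}\bv)$, so I concentrate on singular $A$.

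For singular $A$, I will use the permuted form \eqref{eq:2}. By Lemma~\ref{lemma}(a), $\Pi^\top X\Pi=(\Pi^\top A\Pi)^{1/2}$, and a triplet for a permuted matrix transfers back by permuting $\bsu$ and $\bv$. So I may assume $A$ itself has the form \eqref{eq:2}. Applying Lemma~\ref{lemma}(b) twice gives
\[
X=\begin{bmatrix} X_{11} & X_{12}\\ 0 & X_{22}\end{bmatrix},
\qquad X_{11}=A_{11}^{1/2},
\qquad X_{22}=\mathrm{diag}\bigl((A_{22}^{(1)})^{1/2},\ldots,(A_{22}^{(\nu)})^{1/2}\bigr).
\]
The square-root function is analytic at the nonzero eigenvalues and $0$ is semisimple, so the lemma applies. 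Each $A_{22}^{(k)}$ is singular irreducible, so $0$ is a simple eigenvalue by Theorem~\ref{3}. Then $(A_{22}^{(k)})^{1/2}$ is singular; since it is an M-matrix with simple zero eigenvalue, it is either irreducible, or its Frobenius form has one singular diagonal block. Rather than analyse irreducibility, I will use directly the kernel vector: $A_{22}^{(k)}\bsu_k=0$ with $\bsu_k>0$ (Theorem~\ref{3}(1)). Because $X_{22}^{(k)}$ is a polynomial in $A_{22}^{(k)}$ with zero constant term (it vanishes at the eigenvalue $0$), we get $X_{22}^{(k)}\bsu_k=0$. Hence $X_{22}\bsu_2=0$ with $\bsu_2>0$ the stacked kernel vectors.

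For the first block row, I take the given triplet $A\bsu=\bv$, partition $\bsu=(\bsu_1,\bsu_2)$ and $\bv=(\bv_1,\bv_2)$, and note that $A_{22}\bsu_2=\bv_2\ge0$ forces $\bv_2=0$ by Theorem~\ref{3}(4) applied blockwise. So $\bsu_2$ is exactly a positive kernel vector of $A_{22}$ as above. I then want $\bw_1:=X_{11}\bsu_1+X_{12}\bsu_2\ge 0$. The identity $X^2=A$ gives $X_{11}X_{12}+X_{12}X_{22}=A_{12}$. Multiplying on the right by $\bsu_2$ and using $X_{22}\bsu_2=0$ gives $X_{11}X_{12}\bsu_2=A_{12}\bsu_2$. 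Since $X_{11}$ is invertible,
\[
\bw_1=X_{11}^{-1}\bigl(A_{11}\bsu_1+A_{12}\bsu_2\bigr)=X_{11}^{-1}\bv_1 .
\]
The matrix $X_{11}$ is a nonsingular M-matrix, being the principal square root of the nonsingular M-matrix $A_{11}$, so $X_{11}^{-1}\ge0$ by Theorem~\ref{2}, and therefore $\bw_1\ge 0$. Thus $X\bsu=(\bw_1,0)\ge 0$ with $\bsu>0$, and $(-\offdiag(X),\bsu,X\bsu)$ is a triplet for $A^{1/2}$. It is the same vector $\bsu$ as for $A$, mirroring the nonsingular case.

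The main obstacle I expect is justifying $X_{22}^{(k)}\bsu_k=0$, that is, that the kernel of $A$ on the singular blocks is inherited by $A^{1/2}$. I would argue it via the polynomial representation $X=p(A)$ with $p(0)=0$, which holds because $0$ is semisimple: one can take $p$ interpolating $\sqrt{\cdot}$ on the spectrum, and at a semisimple eigenvalue only the value $p(0)=\sqrt0=0$ is required. Then $p(A)\bsu_k=p(0)\bsu_k=0$. The same argument applied globally even shows $X\bz=0$ whenever $A\bz=0$, which could shortcut part of the block computation.
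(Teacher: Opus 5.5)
Your proof is correct, and it takes a genuinely different route from the paper's.

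The paper reduces to the form \eqref{eq:2} and gets the block upper triangular structure of $B^{1/2}$ from Lemma~\ref{lemma}. It then argues that the diagonal blocks of $B^{1/2}$ are of the same type as those of $B$: nonsingular, or singular and irreducible, the latter via Lemma~\ref{lemma}(c). It concludes non-constructively by invoking the Frobenius-normal-form characterization of Theorem~\ref{thm:exist}.

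You avoid both Theorem~\ref{thm:exist} and any irreducibility question. Your argument has three steps:
\begin{itemize}
\item writing $X=p(A)$ with $p(0)=0$, which is legitimate because $0$ is semisimple, gives $\ker A\subseteq\ker X$;
\item Theorem~\ref{3}(4), applied blockwise, gives $\bv_2=0$;
\item the $(1,2)$ block of $X^2=A$ gives $X_{11}\bsu_1+X_{12}\bsu_2=X_{11}^{-1}\bv_1\ge 0$.
\end{itemize}

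What your route buys is an explicit triplet that uses the \emph{same} positive vector $\bsu$ as the given triplet of $A$. In the permuted coordinates, $X\bsu=(X_{11}^{-1}\bv_1,0)$. This treats the nonsingular and singular cases uniformly: with $A_{22}$ empty you recover $A^{-1/2}\bv$. It is also exactly the kind of triplet returned by Algorithms~\ref{alg:cr} and~\ref{alg:in}, which keep $\bsu$ fixed along the iteration.

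The paper's route is shorter once the cited characterization is available. It also records more structural information about $A^{1/2}$, namely the types of its diagonal blocks. The authors rely on this kind of structural reasoning in their subsequent remark about matrices without a triplet.

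Your aside that $(A_{22}^{(k)})^{1/2}$ is ``either irreducible, or its Frobenius form has one singular diagonal block'' is not justified, but it is also never used. It can simply be dropped.
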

\begin{proof}
By Corollary \ref{cor:1}, the matrix is nonsingular or singular with semisimple $0$  eigenvalue, so by Lemma \ref{thm:1} the square root $A^{1/2}$ exists and it is an M-matrix. Moreover, there exists a permutation matrix $\Pi$ such that $B:=\Pi^\top A\Pi$ has the form \eqref{eq:2}.
By Lemma \ref{lemma}-(a-b), with $f(z)=z^{1/2}$, we discover that $A^{1/2}=\Pi B^{1/2}\Pi^\top $ and
\[
B^{1/2} = \begin{bmatrix} A_{11}^{1/2} & \ast & \ast & \ast\\
& (A_{22}^{(1)})^{1/2} \\
& & \ddots \\
& & & (A_{22}^{(\nu)})^{1/2}
\end{bmatrix},
\]
where the diagonal blocks are of the same type (invertible, irreducible singular) as the diagonal blocks of $B$. This is immediate for invertible or $1\times 1$ zero blocks, while for nonzero irreducible singular blocks, we can use Lemma \ref{lemma}-(c).
The necessary and sufficient conditions for the existence of a triplet representation of the M-matrix $B^{1/2}$ is fulfilled. Finally, if $(-\offdiag(B^{1/2}),\bsu,\bv)$ is a triplet for $B^{1/2}$, then $(-\offdiag(A^{1/2}),\Pi\bsu,\Pi\bv)$ is a triplet for $A^{1/2}$.
\end{proof}

Using the proof of Theorem \ref{thm:triplsqrt}, one may prove that if $A$ is a matrix with no triplet representation, then no square root $X$ of $A$ has triplet representation, because the triplet representation imposes a structure on $X$ that is preserved by squaring.

Another interesting remark is that a matrix with a triplet may have M-matrix square roots without a triplet, e.g., the zero matrix that has $\left[\begin{smallmatrix} 0 & -a\\0 & 0 \end{smallmatrix}\right]$, with $a>0$ as M-matrix square roots without a triplet.

Finally, we demonstrate that our problem can be reduced to computing the square root of a matrix $A$ with a triplet that satisfies the following. 
\begin{assumption}\label{ass:2}
The matrix $A$ can be represented as $A=I-C$, where $C\ge 0$ and $\rho(C)\le 1$.
\end{assumption}

Let $A=(a_{ij})$ be an M-matrix assigned in terms of the triplet $(P,\bsu,\bv)$.
Without loss of generality, we may divide the matrix $A$ by a constant $\alpha>0$, replace $A$ with $\wh A=A/\alpha$, and recover $A^{1/2}=\alpha^{1/2} {\widehat A}^{1/2}$. For $\alpha$ sufficiently large, for instance $\alpha\ge \max a_{ii}$ (with $A\ne 0$), we may write $\wh A$ as $\wh A=I-C$, where $C=I- A/\alpha \ge 0$. 

Since we know that $\wh A$ is an M-matrix, then necessarily, $\rho(C)\le 1$. In fact, if $\rho:=\rho(C)>1$, then for the Perron-Frobenius theorem, there would exist $\bw\ge 0$ such that $C\bw=\rho\bw$, so that $\wh A\bw=(1-\rho)\bw$ where $1-\rho<0$. This fact contradicts the property that the real eigenvalues of an M-matrix are nonnegative (see Theorem \ref{1}).
Therefore, without loss of generality, we may restrict our attention to matrices satisfying Assumption \ref{ass:2}.

\section{A triplet version for Cyclic Reduction and Incremental Newton}
\label{sec:crin}

The Cyclic Reduction (CR) algorithm introduced in \cite{meini} and the Incremental Newton (IN) method proposed in \cite{Iannazzo2003273} are two iterations that produce (a multiple of) the same sequence obtained by applying the Newton method to the equation $X^2-A=0$ with $X_0=A$. When applied to an M-matrix, they produce a  sequence of M-matrices. When $A$ has a triplet representation, we show that all the iterates own a triplet representation and we provide an iteration based on the triplets, thus removing possible cancellation in the overall computation.

Throughout this section we assume that $A=(a_{ij})$ satisfies Assumption~\ref{ass:2}. This is obtained by exploiting the identity $A^{1/2}=(A/s)^{1/2}s^{1/2}$, with $s\ge \max_i a_{ii}$. Note that if $A$ has triplet representation $(-\offdiag(A),\bsu,\bv)$ then $A/s$ has representation $(-\offdiag(A)/s,\bsu,\bv/s)$.

\subsection{Cyclic Reduction}\label{sec:cr}

The CR algorithm applied to the palindromic matrix polynomial
$P(\lambda)=(A-I)\lambda^2 + 2(A+I)\lambda + A-I=0$, produces the sequences
\begin{equation}\label{cr}
    \begin{array}{ll}
       W_{\ell+1}=-W_\ell Z_\ell^{-1}W_\ell,& W_0=A-I,\\[1ex]
       Z_{\ell+1}=Z_\ell+2 W_{\ell+1}, &  Z_0=2(A+I),
    \end{array}
\end{equation}
$\ell=0,1,\ldots$, that are well defined when $A^{1/2}$ exists and $\lim_{\ell\to\infty} Z_\ell = 4A^{1/2}$ \cite{meini}. 
Under Assumption \ref{ass:2}, $W_0\le 0$ and the matrices $Z_\ell$, defined in \eqref{cr}, are nonsingular M-matrices, and $W_\ell\le 0$, $\ell=1,2,\ldots$

Concerning convergence,  if $A$ is nonsingular, then for any matrix norm,
\begin{equation}\label{eq:convspeedCR}
\|W_\ell\|=O(\rho(H)^{2^\ell}), ~~\|4A^{1/2}-Z_\ell\|=O(\rho(H)^{2^{\ell+1}}),
\end{equation}
where $H=(A^{1/2}-I)(A^{1/2}+I)^{-1}$ and $\rho(H)<1$, that is, convergence is quadratic. On the other hand, if $A$ is singular with simple $0$ eigenvalue, then $\|W_\ell\|=O(\frac{1}{2^\ell})$, and $\|4A^{1/2}-Z_\ell\|=O(\frac{1}{2^\ell})$, that is, convergence is linear with rate factor $1/2$.

If the matrix $A$ has triplet representation $(-\offdiag(A),\bsu,\bv)$, we prove that $Z_\ell$ has a triplet representation that can be computed in a stable way. This is obtained by the following.
\begin{theorem}\label{th:cr}
Let $A$ be an M-matrix fulfilling Assumption \ref{ass:2} and with triplet representation $(-\offdiag(A),\bsu,\bv)$. The sequence \eqref{cr} is well defined, moreover $(-\offdiag(Z_\ell),\bsu,\bv_\ell)$ is a triplet for $Z_\ell$, where $ \bv_\ell=\bp_\ell-2W_\ell\bsu$ 
with \begin{equation}\label{eq:trip}\begin{split}
&\bp_0=4\bv,\\
&\bp_{\ell+1}=\bp_\ell-2W_\ell Z_\ell^{-1}\bp_\ell,~~\ell=0,1,\ldots.
\end{split}
\end{equation}
\end{theorem}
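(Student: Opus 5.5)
The plan is to take from the discussion preceding the theorem (the analysis of \cite{meini}) that, under Assumption~\ref{ass:2}, the matrices $Z_\ell$ are nonsingular M-matrices and $W_\ell\le 0$ for all $\ell$. This gives well-definedness of \eqref{cr} and also $\offdiag(Z_\ell)\le 0$, so the first component of the proposed triplet is correct. If I wanted this self-contained, I would argue by induction: $Z_{\ell+1}=Z_\ell-2W_\ell Z_\ell^{-1}W_\ell$ is the Schur complement of the Z-matrix $\left[\begin{smallmatrix} Z_\ell & \sqrt2 W_\ell\\ \sqrt2 W_\ell & Z_\ell\end{smallmatrix}\right]$. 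Since $W_\ell Z_\ell^{-1}W_\ell\ge 0$, $Z_{\ell+1}$ is a Z-matrix, and $W_{\ell+1}=-W_\ell Z_\ell^{-1}W_\ell\le 0$. Nonsingularity follows from the Schur complement of a nonsingular M-matrix being a nonsingular M-matrix, handling the borderline case by replacing $A$ with $A+\varepsilon I$ and letting $\varepsilon\to 0$. This nonsingularity step is the only delicate point, which is why I would rather cite it.

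The heart of the proof is an algebraic identity. I would define $\bp_\ell := Z_\ell\bsu+2W_\ell\bsu$ and show that this $\bp_\ell$ satisfies \eqref{eq:trip}. For $\ell=0$,
\[
\bp_0=2(A+I)\bsu+2(A-I)\bsu=4A\bsu=4\bv.
\]
For the recurrence, I would use $Z_{\ell+1}=Z_\ell+2W_{\ell+1}$ to get
\[
\bp_{\ell+1}=Z_\ell\bsu+4W_{\ell+1}\bsu=Z_\ell\bsu-4W_\ell Z_\ell^{-1}W_\ell\bsu .
\]
On the other side,
\[
\bp_\ell-2W_\ell Z_\ell^{-1}\bp_\ell = Z_\ell\bsu+2W_\ell\bsu-2W_\ell\bsu-4W_\ell Z_\ell^{-1}W_\ell\bsu,
\]
which is the same vector. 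Hence $Z_\ell\bsu=\bp_\ell-2W_\ell\bsu=\bv_\ell$ for every $\ell$.

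It remains to check the sign conditions $\bsu>0$ and $\bv_\ell\ge 0$. The vector $\bsu>0$ is given. By induction, $\bp_\ell\ge0$: it holds for $\ell=0$ since $\bv\ge 0$. In the inductive step, $-2W_\ell\ge 0$ and $Z_\ell^{-1}\ge 0$ (Theorem~\ref{2}, since $Z_\ell$ is a nonsingular M-matrix), so $\bp_{\ell+1}=\bp_\ell+(-2W_\ell)Z_\ell^{-1}\bp_\ell\ge 0$ is a sum of nonnegative vectors. Finally, $\bv_\ell=\bp_\ell+(-2W_\ell)\bsu\ge 0$ as well.

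So $(-\offdiag(Z_\ell),\bsu,\bv_\ell)$ is a triplet for $Z_\ell$, and every quantity in \eqref{eq:trip} is formed by additions of nonnegative terms, which is the point exploited later for stability. The main obstacle is only the M-matrix/nonsingularity property of the $Z_\ell$; everything else is the short identity above.
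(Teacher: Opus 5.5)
Your proof is correct and is essentially the paper's: the paper also takes well-definedness, $W_\ell\le 0$ and $Z_\ell^{-1}\ge 0$ from the cited CR theory, and deduces $\bp_\ell,\bv_\ell\ge 0$ from these signs. It then proves $Z_\ell\bsu=\bv_\ell$ by induction through the same identity $\bp_\ell=(Z_\ell+2W_\ell)\bsu$, whereas you organize this as checking that the closed form satisfies \eqref{eq:trip}. Citing the nonsingularity is the right call, because your optional $A+\varepsilon I$ limiting argument would only show that $Z_{\ell+1}$ is an M-matrix, not that it is nonsingular, since a limit of nonsingular M-matrices can be singular.
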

\begin{proof}
Since $A$ has a triplet representation, then $A^{1/2}$ exists and \eqref{cr} is well defined. From $W_\ell\le 0$ and $Z_\ell^{-1}\ge 0$ we deduce that $\bp_\ell\ge 0$ and $\bv_\ell\ge 0$. It remains to show that $\bv_\ell=Z_\ell\bsu$, which is proved by induction. For $\ell=0$,
we have 
\[
    \bv_0=\bp_0-2W_0\bsu=4\bv-2(A-I)\bsu=2(\bv+\bsu)=2(A+I)\bsu=Z_0\bsu.
\]
For the inductive step, we get
\[
\begin{split}
    \bv_{\ell+1} & = \bp_{\ell+1} - 2W_{\ell+1}\bsu = (I-2W_\ell Z_\ell^{-1})\bp_\ell - 2W_{\ell+1}\bsu
    \\ & = (I-2W_\ell Z_\ell^{-1})(Z_\ell+2W_\ell)\bsu - 2W_{\ell+1}\bsu
    \\ & = Z_\ell\bsu - 4W_\ell Z_\ell^{-1}W_\ell\bsu  - 2W_{\ell+1}\bsu = (Z_\ell + 2W_{\ell+1})\bsu = Z_{\ell+1}\bsu,
\end{split}
\]
where we used \eqref{cr}, \eqref{eq:trip}, and $\bp_\ell=\bv_\ell+2W_\ell\bsu =(Z_\ell+2W_\ell)\bsu$.
\end{proof}

Theorem \ref{th:cr} leads to an algorithm for the square root without numerical cancellation except in the initialization $W_0=A-I$, provided that $A$ fulfills Assumption \ref{ass:2}. At step $\ell$, since $W_\ell\le 0$ and $\bp_\ell\ge 0$, we can compute $Z_\ell^{-1}W_\ell$ and  $Z_\ell^{-1}\bp_\ell$ accurately, via the triplet of $Z_\ell$. Then we can compute $\bp_{\ell+1}$, $\bv_{\ell+1}$, $W_{\ell+1}$ and $\offdiag(Z_{\ell+1})$ summing only positive numbers. 

The overall implementation of this procedure is given in Algorithm \ref{alg:cr}, where the symbol $\oslash$ stands for component-wise division.

\begin{algorithm}
\sml
\textbf{Input:} A triplet representation $(-\offdiag(A),\bsu,\bv)$ of the M-matrix $A$, where $\bsu>0$, $\bv=A\bsu\ge 0$.

\textbf{Output:} The entries of $Z=A^{1/2}$ and its triplet representation $(-\offdiag(Z),\bsu,\bw)$.

Set $s=\gamma\max_i a_{ii}$, (default: $\gamma=4$)\;
$A\gets A/s$, $\bv\gets \bv/s$\;

Set $W_0=A-I$, $Z_0=2(I+A)$, $\bp_0=4\bv$, $\bv_0=2(\bsu+\bv)$\;

\For{$\ell=0,1,2,\ldots$}{
     given the triplet $(-\offdiag(Z_\ell),\bsu,\bv_\ell)$ compute $[G_\ell,\bt_\ell]=Z_\ell^{-1}[W_k,\bp_\ell]$ by means of  Algorithm \ref{alg:gth}\;
     $W_{\ell+1}=-W_\ell G_\ell$\;
     $Z_{\ell+1}=Z_\ell+2W_{\ell+1}$\;
     $\bp_{\ell+1}=\bp_\ell-2W_\ell\bt_\ell$\;
     $\bv_{\ell+1}=\bp_\ell-2W_\ell\bsu$ \;
     \If{$\|W_{\ell+1}\|\le\varepsilon$}{exit\;}
    }
    $\bw=\bv_{\ell+1}$,\quad
    $\bd=(\bw-\offdiag(Z_{\ell+1})\bsu)\oslash\bsu$\;
    $Z=\frac{1}{4}(\hbox{diag}(\bd)+\offdiag(Z_{\ell+1}))$,\quad $\bw=\frac{1}{4}\bw$\;
$\offdiag(Z)\gets \offdiag(Z)s^{1/2}$,\quad $\bw\gets \bw s^{1/2}$,\quad $Z\gets Zs^{1/2}$.
\caption{\sml Computing $A^{1/2}$ by means of CR}\label{alg:cr} 
\end{algorithm}

In the initialization of Algorithm \ref{alg:cr}, one needs to set $s$ such that Assumption~\ref{ass:2} is fulfilled. We proved that for a matrix $\wh A=(\wh a_{ij})$, $s\ge \max_i \wh a_{ii}$ does the job with $A:=\wh A/s$.
From the point of view of the convergence it is better to choose $s$ not too large, since, for instance, if $A$ is nonsingular and $\|A\|$ is small, then $H$ is close to $-I$ an $\rho(H)$ is close to $1$ so that convergence slows down according to \eqref{eq:convspeedCR}.

On the other hand, in the initialization of Algorithm \ref{alg:cr}, the matrix $W_0=A-I$ must be computed, and the diagonal entries of $W_0$ are obtained through subtraction of nonnegative quantities. If $s$ is as small as possible, say $s=\max_i\wh a_{ii}$, this exposes the algorithm to numerical cancellation in case $\wh A$ is affected by an error. However, this issue can be mitigated by choosing $s$ sufficiently large so that the differences $(W_0)_{ii}=(\wh A)_{ii}/s-1$ do not much amplify the error present in $\wh A$. For instance, choosing $s=4\max_i a_{ii}$ yields an amplification of the error in the entries of $A$ at most by the factor $4/3$.

\begin{remark}\rm
If the matrix $A$ is such that $A\ones = 0$, as in the graph Laplacian, then $\bsu=\ones$ and $\bv=0$. Consequently, $\bp_\ell=0$ and $\bv_\ell=-2W_\ell\ones$ for any $\ell$. This fact simplifies the implementation of the algorithm. A similar property holds if $A\bsu=0$ for some vector $\bsu>0$, or if $A\bsu=\lambda \bsu$ since, in this case, $\bsu$ is an eigenvector of $W_\ell$ and $Z_\ell$.
\end{remark}

\begin{remark}\rm
If the matrix $A$ does not possess a triplet representation, while $A^{1/2}$ exists, the sequence \eqref{cr} is well defined with $Z_\ell$ invertible, so that $Z_\ell$ has a triplet representation, while the limit may not, e.g., $A=A^{1/2}=\left[\begin{smallmatrix} 1 & 0 \\ -1 & 0\end{smallmatrix}\right]$.
\end{remark}

\subsubsection{Accelerating convergence}\label{sec:shift}
When $A$ is singular with a semisimple $0$ eigenvalue,  the convergence of Cyclic Reduction is linear instead of quadratic. 
In this case, the shift technique, described in \cite[Section 3.6]{blm:book}, can be used to accelerate the convergence of CR for computing the square root under suitable conditions.

Assume that $A$ is a singular irreducible M-matrix so that, by Theorem \ref{3}, there exists a vector $\bsu>0$ such that $A\bsu=0$. Let $\bw\ge 0$ be such that $\bsu^\top \bw=1$, and set $Q=\sigma \bsu\bw^\top $, where $\sigma\in\mathbb R\setminus\{2\}$. Consider the following sequences generated by CR with rank-one modifications of the initial conditions in the previous section:
\begin{equation}\label{eq:crshifted}
\begin{array}{ll}
\wh W_{\ell+1}=-\wh W_\ell\wh Z_\ell^{-1}\wh W_\ell,&\wh W_0
=A-I+Q, \\
\wh Z_{\ell+1}=\wh Z_\ell+2\wh W_{\ell+1},&\wh Z_0=2(A+I)-Q.
\end{array}
\end{equation}

\begin{theorem}
Let $A$ be a singular irreducible M-matrix and $\{Z_\ell\}_\ell,\{W_\ell\}_\ell$ be the sequences obtained by \eqref{eq:crshifted}.
Then the matrix $\wh Z_0^{-1}\wh W_0$ has the same eigenvalues as $Z_0^{-1}W_0$ except for the eigenvalue $-1/2$ that is replaced by $\frac{\sigma-1}{2-\sigma}$.
In particular, if $\sigma\in (0,\frac{4}{3})$, then $\wh Z_0^{-1}\wh W_0$
has no real eigenvalues of modulus greater
than $1/2$, and
    $\lim\sup_\ell \| \wh W_\ell\|^\frac{1}{2^\ell}\le \xi$, $\lim\sup_\ell \|\wh Z_\ell-\wh S\|^\frac{1}{2^\ell}\le\xi^2$, with $0<\xi<1$ where
    $\wh S=\wh Z_0(I-4(\wh Z_0^{-1}\wh W_0)^2)^{1/2}$. 
\end{theorem}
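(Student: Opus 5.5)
The idea is to reduce the whole statement to the spectrum of $R:=Z_0^{-1}W_0$ and of its rank-one modification, and then invoke the standard convergence theory of CR from \cite{blm:book}. Write $W_0=A-I$ and $Z_0=2(A+I)$. Since $A$ and $I$ commute, $R=\frac12(A+I)^{-1}(A-I)$, whose eigenvalues are $\frac{\lambda-1}{2(\lambda+1)}$ for $\lambda$ ranging over the spectrum of $A$. The eigenvalue $\lambda=0$ gives $-1/2$. Because $A$ is a singular irreducible M-matrix, $0$ is a simple eigenvalue by Theorem~\ref{3}, and all other eigenvalues have positive real part by Theorem~\ref{1}. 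For $\mathrm{Re}\,\lambda>0$ we have $|\lambda-1|<|\lambda+1|$, so those eigenvalues of $R$ have modulus $<1/2$.

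\textbf{Step 1 (eigenvalue replacement).} The identity $A\bsu=0$ gives
\[
\wh W_0\bsu=(-1+\sigma)\bsu,\qquad \wh Z_0\bsu=(2-\sigma)\bsu .
\]
Hence $\bsu$ is an eigenvector of $\wh Z_0^{-1}\wh W_0$ with eigenvalue $\frac{\sigma-1}{2-\sigma}$. This requires $\wh Z_0$ to be invertible. I would verify invertibility through the matrix determinant lemma, or with the Brauer-type argument below.

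To see that the other eigenvalues are unchanged, I would use the Brauer theorem (as in \cite[Sec.~3.6]{blm:book}) in the polynomial form. Consider the pencil $\varphi(z)=W_0+zZ_0$, whose eigenvalues are $z=-\mu$ for $\mu$ an eigenvalue of $R$. The shifted pencil is $\wh\varphi(z)=\varphi(z)+(1-z)Q$. Set $\wh\varphi(z)=\varphi(z)\bigl(I+\frac{\sigma(1-z)}{?}\ldots\bigr)$; more simply, left-multiply by the left Perron vector. Let $\bw$ satisfy $\bw^\top\bsu=1$. Pick a left-eigenvector basis that completes $\bsu$ to a similarity $T=[\bsu,\,V]$. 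In that basis, $Q=\sigma\bsu\bw^\top$ only changes the first row, where $W_0,Z_0$ are block upper triangular with first diagonal entries $-1,2$. The first diagonal entries become $-1+\sigma$ and $2-\sigma$, while the trailing blocks are unchanged. So $\wh Z_0$ is invertible for $\sigma\neq 2$, the eigenvalue $-1/2$ is replaced by $\frac{\sigma-1}{2-\sigma}$, and the remaining eigenvalues are kept. This block-triangular computation is the step I would write carefully; it is routine once $T^{-1}\bsu=\be_1$.

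\textbf{Step 2 (location for $\sigma\in(0,4/3)$).} The map $\sigma\mapsto\frac{\sigma-1}{2-\sigma}$ is increasing on $(0,2)$, taking the value $-1/2$ at $\sigma=0$ and $1/2$ at $\sigma=4/3$. On $(0,4/3)$ its value therefore lies strictly inside $(-1/2,1/2)$. Combined with the bound $<1/2$ for the other eigenvalues, the spectral radius satisfies $\rho(\wh Z_0^{-1}\wh W_0)<1/2$; in particular, no real eigenvalue has modulus $\ge 1/2$.

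\textbf{Step 3 (convergence).} I would apply the CR convergence theorem for the quadratic polynomial $\wh W_0+z\wh Z_0+z^2\wh W_0$ (see \cite{meini}, \cite{blm:book}). Its roots solve $\mu z^2+z+\mu=0$ with $\mu=-(\text{eigenvalue of }\wh Z_0^{-1}\wh W_0)$; I would double-check this sign convention against the sequences \eqref{eq:crshifted}. When $|\mu|<1/2$, the roots split into $n$ roots inside and $n$ roots outside the unit circle, with modulus at most $\xi<1$ and at least $1/\xi>1$ respectively, where $\xi=\max|z_-(\mu)|$. No root lies on the unit circle: a root $z=e^{i\theta}$ would force $\mu=-1/(2\cos\theta)$, a real number of modulus $\ge1/2$, which Step 2 excludes. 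This is precisely why the statement is phrased in terms of real eigenvalues. Standard CR theory then gives $\wh W_\ell=O((\xi+\epsilon)^{2^\ell})$ and convergence of $\wh Z_\ell$ to a limit $\wh S$ at the rate $\xi^{2\cdot 2^\ell}$.

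To identify the limit, note the invariant $\wh Z_\ell^2-4\wh W_\ell^2=\wh Z_0^2-4\wh W_0^2$ in the commuting case; in general I would use the functional relation of CR. Letting $\ell\to\infty$ gives $\wh S^2=\wh Z_0^2(I-4(\wh Z_0^{-1}\wh W_0)^2)$, and the principal root is selected by continuity. The main obstacle is this Step 3, namely checking that the hypotheses of the CR convergence theorem hold, in particular the splitting and the invertibility of all $\wh Z_\ell$. I would settle that by citing \cite[Thm.~3.x]{blm:book}, which assumes only that the splitting holds with no unimodular roots.
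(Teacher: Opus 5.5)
Your Steps 1 and 2 are correct. Step 1 differs from the paper only in technique. The paper applies the matrix determinant lemma to $\det(\lambda\wh Z_0-\wh W_0)=\det(\lambda Z_0-W_0-(\lambda+1)Q)$, using $(\lambda Z_0-W_0)\bsu=(2\lambda+1)\bsu$. You instead block-triangularize $W_0$, $Z_0$ and $Q$ simultaneously with $T=[\bsu,V]$. Any $V$ completing $\bsu$ to a basis works, so no ``left-eigenvector basis'' is needed, and the abandoned line containing the question mark should be deleted. Your version gives the invertibility of $\wh Z_0$ for $\sigma\neq 2$ and the multiplicity bookkeeping at no extra cost. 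Your Step 2 also yields the stronger conclusion $\rho(\wh Z_0^{-1}\wh W_0)<1/2$. For the convergence rates the paper simply cites \cite[Thm.~5]{bb:pcr}, which is essentially what you do too.

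The genuine gap is the identification of the limit $\wh S$, which is part of the statement. The matrices $\wh W_0$ and $\wh Z_0$ do not commute in general: $[\wh W_0,\wh Z_0]=-3[A,Q]=3\sigma\bsu\bw^\top A$. This vanishes only if $\bw^\top A=0$, which fails, for instance, for the choice $\bw=\be_j/u_j$ made later in the paper. Hence $\wh Z_\ell^2-4\wh W_\ell^2$ is not invariant, and the identity you claim ``in general'', $\wh S^2=\wh Z_0^2(I-4R^2)$ with $R=\wh Z_0^{-1}\wh W_0$, is false. The stated formula is instead equivalent to $(\wh Z_0^{-1}\wh S)^2=I-4R^2$ with the principal branch, and ``selecting the root by continuity'' is not a branch argument.

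The missing idea is the factorization $\wh W_0+z\wh Z_0+z^2\wh W_0=\wh Z_0\bigl(zI+(1+z^2)R\bigr)$. It also justifies your unproved claim about the roots, with $\mu$ equal to, not minus, the eigenvalue (harmless for moduli). By induction, $\wh W_\ell=\wh Z_0k_\ell(R)$ and $\wh Z_\ell=\wh Z_0h_\ell(R)$, where $h_0=1$, $k_0(r)=r$, $k_{\ell+1}=-k_\ell^2/h_\ell$ and $h_{\ell+1}=h_\ell+2k_{\ell+1}$. Write $r=-\zeta/(1+\zeta^2)$ with $|\zeta|<1$. Then $k_\ell/h_\ell=-\zeta^{2^\ell}/(1+\zeta^{2^{\ell+1}})$ and $h_\ell=\frac{(1-\zeta^2)(1+\zeta^{2^{\ell+1}})}{(1+\zeta^2)(1-\zeta^{2^{\ell+1}})}$. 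Three consequences follow:
\begin{itemize}
\item $h_\ell$ never vanishes, so there is no breakdown. Standard CR convergence theorems usually take this as a hypothesis rather than deriving it from the splitting, so your proposed citation does not cover it.
\item $k_\ell=O(\zeta^{2^\ell})$.
\item $h_\ell-\sqrt{1-4r^2}=O(\zeta^{2^{\ell+1}})$, since $\sqrt{1-4r^2}=\frac{1-\zeta^2}{1+\zeta^2}$.
\end{itemize}
Passing to matrix functions of $R$ gives the stated rates with $\xi=\max_i|\zeta_i|$, using Cauchy estimates for nontrivial Jordan blocks. It also gives $\wh S=\wh Z_0(I-4R^2)^{1/2}$.
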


\begin{proof}
The eigenvalues of 
$Z_0^{-1} W_0=\frac{1}{2} (A+I)^{-1}(A-I)$ are of the form $\mu=\frac{1}{2} (\lambda+1)^{-1}(\lambda-1)$, where $\lambda$ is an eigenvalue of $A$. Since $A$ is a singular irreducible M-matrix, then 0 is a simple eigenvalue of $A$, and the remaining eigenvalues have positive real parts. Therefore, the eigenvalues of  $Z_0^{-1} W_0$ are $-1/2$ with multiplicity one, while the other eigenvalues have modulus strictly less than $1/2$.

Now observe that 
$
   \det( \lambda I- \wh Z_0^{-1}\wh W_0)=\det(\wh Z_0^{-1})\det(\lambda \wh Z_0-\wh W_0),
$
and 
\[
\begin{aligned}
\det(\lambda \wh Z_0-\wh W_0)&=\det\Big(\lambda Z_0-W_0-(\lambda+1)Q\Big)\\
&= \det (\lambda Z_0-W_0)\det\big(I-\sigma (\lambda+1)(\lambda Z_0-W_0)^{-1}\bsu \bw^\top \big)\\
&= \det (\lambda Z_0-W_0) \det\big(I-\sigma \frac{(\lambda+1)}{2\lambda+1}\bsu \bw^\top \big)\\
&= \frac{1}{2\lambda+1}\det(\lambda Z_0-W_0)(\lambda(2-\sigma)-(\sigma-1)),
\end{aligned}
\]
where we used the fact that $(\lambda Z_0-W_0)\bsu=(2\lambda+1)\bsu$. 

So that, from the above equalities,
we deduce that the eigenvalues of $\wh Z_0^{-1}\wh W_0$ are those of 
$Z_0^{-1}W_0$, except  for the eigenvalue $-\frac{1}{2}$ which is replaced by $\frac{\sigma-1}{2-\sigma}$.
Let $f(\sigma)=\frac{\sigma-1}{2-\sigma}$, then $f'(\sigma)=\frac{1}{(2-\sigma)^2}>0$, that is, $f(\sigma)\geq f(0)=-\frac{1}{2}$  for $\sigma\geq 0$. It can be verified that $f(\frac{4}{3})=\frac{1}{2}$, so that $|f(\sigma)|<\frac{1}{2}$ for $\sigma\in (0,\frac{4}{3})$. 

The last statement of the theorem follows from \cite[Thm. 5]{bb:pcr}.
\end{proof}

As a consequence of the above result, we deduce that the sequence $\{\wh Z_\ell\}_\ell$ converges quadratically to a limit $\wh S$.
We show that there exists a vector $\bm y$ such that $\wh S=S
+\bsu\by^\top $, where $S=\lim_{\ell\to\infty}  Z_\ell=4A^{1/2}$. We need the following: 

\begin{lemma}\label{lem:12}
    For the sequences generated by \eqref{eq:crshifted}, for $\ell=0,1,\ldots$, we have
    \begin{equation}\label{eq:lem}
    \begin{array}{ll}
    \wh W_\ell\bsu=\omega_\ell\bsu,& \wh Z_\ell\bsu=\zeta_\ell\bsu,\\[1ex]
        \omega_{\ell+1}=-\frac{\omega_\ell^2}{\zeta_\ell},& \omega_0=\sigma-1,\\[1ex]
        \zeta_{\ell+1}=\zeta_\ell+2\omega_{\ell+1},& \zeta_0=2-\sigma.
    \end{array}
    \end{equation}
    Moreover, if $0<\sigma<1$, then $w_\ell<0$, $\zeta_\ell>0$ and $0<-\frac{\omega_\ell}{\zeta_\ell}<\frac{1}{2}$.
\end{lemma}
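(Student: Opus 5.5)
The plan is induction on $\ell$, using $A\bsu=0$ and $\bw^\top\bsu=1$, so that $Q\bsu=\sigma\bsu$.

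\emph{Base case.} We compute
\[
\wh W_0\bsu=(A-I)\bsu+\sigma\bsu=(\sigma-1)\bsu,
\qquad
\wh Z_0\bsu=2(A+I)\bsu-\sigma\bsu=(2-\sigma)\bsu,
\]
which gives $\omega_0=\sigma-1$ and $\zeta_0=2-\sigma$.

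\emph{Inductive step.} Assume $\wh W_\ell\bsu=\omega_\ell\bsu$ and $\wh Z_\ell\bsu=\zeta_\ell\bsu$ with $\zeta_\ell\ne0$. Then $\wh Z_\ell^{-1}\bsu=\zeta_\ell^{-1}\bsu$, assuming $\wh Z_\ell$ is invertible, which the well-posedness of \eqref{eq:crshifted} requires. Hence
\[
\wh W_{\ell+1}\bsu=-\wh W_\ell\wh Z_\ell^{-1}(\omega_\ell\bsu)=-\frac{\omega_\ell^2}{\zeta_\ell}\bsu,
\qquad
\wh Z_{\ell+1}\bsu=(\zeta_\ell+2\omega_{\ell+1})\bsu .
\]
These are exactly the recursions \eqref{eq:lem}. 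This part is routine.

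\emph{Sign claims.} For the second statement, set $r_\ell=-\omega_\ell/\zeta_\ell$. For $0<\sigma<1$ we have $\omega_0<0$ and $\zeta_0>0$, so $r_0=(1-\sigma)/(2-\sigma)\in(0,\tfrac12)$. The scalar recursion gives
\[
\omega_{\ell+1}=-\omega_\ell^2/\zeta_\ell<0 \quad\text{whenever } \zeta_\ell>0 .
\]
Next,
\[
\zeta_{\ell+1}=\zeta_\ell-2\omega_\ell^2/\zeta_\ell=\zeta_\ell(1-2r_\ell^2)>0,
\]
since $r_\ell<\tfrac12$ gives $2r_\ell^2<\tfrac12$. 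Finally,
\[
r_{\ell+1}=\frac{\omega_\ell^2/\zeta_\ell}{\zeta_\ell(1-2r_\ell^2)}=\frac{r_\ell^2}{1-2r_\ell^2}.
\]
The map $g(r)=r^2/(1-2r^2)$ is increasing and positive on $(0,\tfrac12)$, with $g(\tfrac12)=\tfrac{1/4}{1/2}=\tfrac12$. Therefore $r_\ell\in(0,\tfrac12)$ implies $r_{\ell+1}\in(0,\tfrac12)$, and induction closes all three claims simultaneously. (The statement's ``$w_\ell<0$'' is read as $\omega_\ell<0$.)

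\emph{Main obstacle.} The only delicate point is ensuring that $\zeta_\ell\neq0$, so the eigenvalue relation for $\wh Z_\ell^{-1}$ holds. For the general-$\sigma$ part, I would rely on the sequence being well defined, i.e.\ on the invertibility of $\wh Z_\ell$, as implicitly assumed by the previous theorem. Since $\bsu$ is an eigenvector of the invertible $\wh Z_\ell$, its eigenvalue $\zeta_\ell$ cannot vanish. For $0<\sigma<1$, positivity of $\zeta_\ell$ comes out of the same induction anyway.
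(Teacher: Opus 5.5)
Your proof is correct and matches the paper's argument. Both establish the eigenvector relations by induction (a step the paper leaves to the reader), then run a second induction on the ratio $t=-\omega_\ell/\zeta_\ell$ using the map $t\mapsto t^2/(1-2t^2)$, which preserves $(0,\tfrac12)$. The only cosmetic difference is that you obtain $\zeta_{\ell+1}>0$ directly from $\zeta_{\ell+1}=\zeta_\ell(1-2r_\ell^2)$, whereas the paper infers it from $\omega_{\ell+1}<0$ together with the sign of $\omega_{\ell+1}/\zeta_{\ell+1}$.
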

\begin{proof}
    The proof of \eqref{eq:lem} can be easily carried out by induction and is left to the reader. To prove that $\omega_\ell<0$,  $\zeta_\ell>0$, and $0<-\frac{\omega_\ell}{\zeta_\ell}<\frac{1}{2}$,  we apply induction once again. For $\ell=0$ we have $\omega_0<0$, $\zeta_0>0$ and $0<-\frac{\omega_0}{\zeta_0}<\frac{1}{2}$. For the inductive step, assume that $\omega_\ell<0$, $\zeta_\ell>0$, $0<-\frac{\omega_\ell}{\zeta_\ell}<\frac{1}{2}$  and consider $-\frac{\omega_{\ell+1}}{\zeta_{\ell+1}}$. We have 
    \[
    -\frac{\omega_{\ell+1}}{\zeta_{\ell+1}}=\frac{\omega_\ell^2}{\zeta_\ell} \frac{1}{\zeta_\ell-2\omega_\ell^2/\zeta_\ell}=\frac{\omega_\ell^2/\zeta_\ell^2}{1-2\omega_\ell^2/\zeta_\ell^2}.
    \]
    Since $t:=-\omega_\ell/\zeta_\ell\in(0,1/2)$, then $t^2/(1-2t^2)\in(0,1/2)$ and consequently $\omega_{\ell+1}/\zeta_{\ell+1}<0$. Since $\zeta_\ell>0$, we deduce from \eqref{eq:lem} that $\omega_{\ell+1}$ is negative and from the inequality $\omega_{\ell+1}/\zeta_{\ell+1}<0$ we conclude that $\zeta_{\ell+1}>0$.
\end{proof}

We are ready to prove the following  result:

\begin{theorem}\label{th:mainshift}
For the sequences generated by \eqref{eq:crshifted} we have, for $\ell=0,1,\ldots$,
\begin{equation}\label{eq:wz}
\begin{array}{ll}
\wh Z_\ell=Z_\ell+\bsu\bz_\ell^\top , & \wh W_\ell=W_\ell+\bsu\bw_\ell^\top \\[1ex]
\bw_{\ell+1}^\top =\frac{1}{2} \bw_\ell^\top -(\frac{1}{2} \bz_\ell^\top +\bw_\ell^\top )\wh{Z}_\ell^{-1}\wh{W}_\ell,&  \bw_0=\sigma\bw, \\[1ex]
\bz_{\ell+1}=\bz_\ell+2\bw_{\ell+1},& \bz_0=-\sigma\bw  .
\end{array}
\end{equation}
The limit $\wh S:=\lim_{\ell\to\infty}  \wh{Z}_\ell$ is such that $\wh S-S=\bsu\bz^\top $, where $\bz=\lim_{\ell\to\infty}  \bz_\ell$ and $S:=\lim_{\ell\to\infty}  Z_\ell=4A^{1/2}$, so that $A^{1/2}=4(\wh S-\bsu\bz^\top )$. Moreover,  $S= 16 A\wh S^{-1}$ so that $A^{1/2}=\frac{1}{4}A\wh S^{-1}$. 
Finally, if $\wh{W}_0\le 0$ and $\wh{Z_0}$ is an M-matrix, then $\wh W_\ell\le 0$, and $\wh{Z}_\ell$ are M-matrices, for $\ell=1,2,\ldots$.
\end{theorem}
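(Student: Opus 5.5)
We argue by induction on $\ell$, using the relations $A\bsu=0$, $W_0\bsu=-\bsu$, $Z_0\bsu=2\bsu$. The eigenvector relations of Lemma~\ref{lem:12} then hold for the unshifted sequences: $W_\ell\bsu=\omega'_\ell\bsu$ and $Z_\ell\bsu=\zeta'_\ell\bsu$.

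\textbf{Base case.} For $\ell=0$ we have $\wh W_0=W_0+\sigma\bsu\bw^\top$ and $\wh Z_0=Z_0-\sigma\bsu\bw^\top$. This gives $\bw_0=\sigma\bw$ and $\bz_0=-\sigma\bw$, as claimed in \eqref{eq:wz}.

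\textbf{Inductive step.} Assume $\wh Z_\ell=Z_\ell+\bsu\bz_\ell^\top$ and $\wh W_\ell=W_\ell+\bsu\bw_\ell^\top$. We compute
\[
\wh W_{\ell+1}=-\wh W_\ell\wh Z_\ell^{-1}\wh W_\ell=-(W_\ell+\bsu\bw_\ell^\top)\wh Z_\ell^{-1}\wh W_\ell .
\]
To handle $W_\ell\wh Z_\ell^{-1}$, we first establish an auxiliary identity by induction:
\[
W_\ell=\tfrac12(Z_\ell-\wh Z_\ell)+\tfrac12\bsu\bz_\ell^\top+\ldots ,
\]
or, more usefully, write $W_\ell=\wh W_\ell-\bsu\bw_\ell^\top$. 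Substituting,
\[
-W_\ell\wh Z_\ell^{-1}\wh W_\ell=-\wh W_\ell\wh Z_\ell^{-1}W_\ell-\wh W_\ell\wh Z_\ell^{-1}\bsu\bw_\ell^\top+\bsu\bw_\ell^\top\wh Z_\ell^{-1}\wh W_\ell .
\]
A cleaner route is to compare directly with $W_{\ell+1}=-W_\ell Z_\ell^{-1}W_\ell$ via Sherman--Morrison:
\[
\wh Z_\ell^{-1}=Z_\ell^{-1}-\frac{Z_\ell^{-1}\bsu\bz_\ell^\top Z_\ell^{-1}}{1+\bz_\ell^\top Z_\ell^{-1}\bsu},
\qquad Z_\ell^{-1}\bsu=\bsu/\zeta'_\ell .
\]
Every correction term then has the form $\bsu\,(\cdot)^\top$ because $W_\ell\bsu\parallel\bsu$. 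Hence $\wh W_{\ell+1}-W_{\ell+1}=\bsu\bw_{\ell+1}^\top$ for some row vector $\bw_{\ell+1}^\top$. The relation $\bz_{\ell+1}=\bz_\ell+2\bw_{\ell+1}$ follows immediately from the $Z$-recursion.

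\textbf{Main obstacle: the explicit formula for $\bw_{\ell+1}$.} Identifying it with the stated expression is the hardest step. Since everything is of the form $\bsu\,(\cdot)^\top$, I would identify $\bw_{\ell+1}^\top$ by applying a left annihilator or by comparing coefficients. The key input is the scalar identity between $\omega_\ell,\zeta_\ell$ and $\omega'_\ell,\zeta'_\ell$, together with $\bz_\ell^\top\bsu=\zeta_\ell-\zeta'_\ell$ and $\bw_\ell^\top\bsu=\omega_\ell-\omega'_\ell$. I also expect to use $\zeta'_\ell=-2\omega'_\ell$. This holds because the unshifted eigenvalue $-1/2$ is preserved: $\omega'_{\ell+1}/\zeta'_{\ell+1}=-1/2$ by the recursion, since $\omega'_0=-1$ and $\zeta'_0=2$. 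With $\zeta'_\ell=-2\omega'_\ell$, we get $W_\ell\bsu=-\frac12 Z_\ell\bsu$. The coefficient $\frac12$ in the formula then comes from $W_\ell=-\frac12\wh Z_\ell+(\text{terms})$ acting along $\bsu$. I would verify the formula by multiplying both candidate expressions on the right by $\wh Z_\ell$ and checking equality of rank-one terms.

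\textbf{Limits.} Since $\wh Z_\ell\to\wh S$ by the previous theorem and $Z_\ell\to S$, the difference $\bsu\bz_\ell^\top$ converges. Hence $\bz_\ell\to\bz$ and $\wh S-S=\bsu\bz^\top$.

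For $S=16A\wh S^{-1}$, we use $S^2=16A$ and $A\bsu=0$. These give $16A\wh S^{-1}\wh S=16A=S^2=S(\wh S-\bsu\bz^\top)$. Moreover $S\bsu=4A^{1/2}\bsu=0$, because $A^{1/2}$ is a polynomial in $A$ without constant term when $A$ is singular. Therefore $S\wh S=16A$, so $S=16A\wh S^{-1}$, provided $\wh S$ is invertible. Invertibility follows since the eigenvalue $0$ of $S$ is replaced by $\lim\zeta_\ell>0$, by Lemma~\ref{lem:12}.

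\textbf{Sign structure.} Assume $\wh W_0\le0$ and that $\wh Z_0$ is an M-matrix. The nonpositivity of $\wh W_\ell$ and the M-matrix property of $\wh Z_\ell$ follow by the same induction used for the unshifted CR. We use $\wh Z_\ell\bsu=\zeta_\ell\bsu$ with $\zeta_\ell>0$, so $\wh Z_\ell$ is a Z-matrix with a positive vector $\bsu$ satisfying $\wh Z_\ell\bsu>0$, hence a nonsingular M-matrix. Then $\wh Z_\ell^{-1}\ge0$, so $\wh W_{\ell+1}\le0$, and $\wh Z_{\ell+1}$ stays a Z-matrix with $\wh Z_{\ell+1}\bsu=\zeta_{\ell+1}\bsu>0$.
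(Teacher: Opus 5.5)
Your proposal has a genuine gap in the one non-routine part of the statement, the explicit recursion $\bw_{\ell+1}^\top=\frac12\bw_\ell^\top-(\frac12\bz_\ell^\top+\bw_\ell^\top)\wh Z_\ell^{-1}\wh W_\ell$. You flag it as the main obstacle but never derive it. The Sherman--Morrison argument shows only that $\wh W_{\ell+1}-W_{\ell+1}=\bsu\bw_{\ell+1}^\top$ for \emph{some} vector $\bw_{\ell+1}$. That is enough for $\bz_{\ell+1}=\bz_\ell+2\bw_{\ell+1}$, but the routes you sketch for identifying the vector cannot work. The scalar relations $\bz_\ell^\top\bsu=\zeta_\ell-\zeta'_\ell$ and $\bw_\ell^\top\bsu=\omega_\ell-\omega'_\ell$ determine only the single functional $\bw_{\ell+1}^\top\bsu$. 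Right-multiplying by $\wh Z_\ell$ does not remove $\wh Z_\ell^{-1}$, because $\wh W_\ell$ and $\wh Z_\ell$ do not commute: already $\wh W_0\wh Z_0-\wh Z_0\wh W_0=3\sigma\bsu\bw^\top A\neq 0$ in general.

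The missing step is short, and you already have its key ingredient $W_\ell Z_\ell^{-1}\bsu=-\frac12\bsu$. Start from $\wh W_{\ell+1}=-W_\ell\wh Z_\ell^{-1}\wh W_\ell-\bsu\bw_\ell^\top\wh Z_\ell^{-1}\wh W_\ell$. In the first term only, substitute the resolvent identity $\wh Z_\ell^{-1}=Z_\ell^{-1}-Z_\ell^{-1}\bsu\bz_\ell^\top\wh Z_\ell^{-1}$. Unlike the Sherman--Morrison form, this produces the factor $\bz_\ell^\top\wh Z_\ell^{-1}$ that occurs in the statement:
\begin{align*}
-W_\ell\wh Z_\ell^{-1}\wh W_\ell
&=-W_\ell Z_\ell^{-1}\bigl(W_\ell+\bsu\bw_\ell^\top\bigr)+\bigl(W_\ell Z_\ell^{-1}\bsu\bigr)\bz_\ell^\top\wh Z_\ell^{-1}\wh W_\ell\\
&=W_{\ell+1}+\tfrac12\bsu\bw_\ell^\top-\tfrac12\bsu\bz_\ell^\top\wh Z_\ell^{-1}\wh W_\ell .
\end{align*}
Adding back $-\bsu\bw_\ell^\top\wh Z_\ell^{-1}\wh W_\ell$ gives exactly the stated recursion. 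This is the same computation as the paper's three-term telescoping identity for $\wh W_{\ell+1}-W_{\ell+1}$.

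The remaining parts are sound, with three remarks.
\begin{itemize}
\item Deriving $S\wh S=S^2+S\bsu\bz^\top=16A$ from $S\bsu=0$ is a slightly shorter route than the paper's expansion of $\wh S^2$. You also rightly address the invertibility of $\wh S$, which the paper takes for granted. However, Lemma~\ref{lem:12} gives only $\zeta_\ell>0$ for each $\ell$. You still need $\lim_\ell\zeta_\ell>0$, e.g.\ from $\zeta_{\ell+1}=\zeta_\ell(1-2t_\ell^2)$ with $t_\ell=-\omega_\ell/\zeta_\ell\to0$ quadratically, and you need the simplicity of the eigenvalue $0$ of $S$.
\item Your identities give $A^{1/2}=\frac14(\wh S-\bsu\bz^\top)=4A\wh S^{-1}$, as in the last line of the paper's proof. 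The factors $4$ and $\frac14$ in the printed statement are swapped.
\item Your sign-structure argument, like the paper's, takes $\zeta_\ell>0$ from Lemma~\ref{lem:12}, which assumes $0<\sigma<1$. The hypotheses $\wh W_0\le0$ and $\wh Z_0$ an M-matrix force only $\sigma\le1$, because $\wh W_0\bsu=(\sigma-1)\bsu$. The extra condition $\sigma\ge0$ is genuinely needed. Take $A=\left[\begin{smallmatrix}1&-1\\-1&1\end{smallmatrix}\right]$, $\bsu=\ones$, $\bw=\be_1$, $\sigma=-1$: both hypotheses hold, yet $\zeta_2=-31/3$. So $\wh Z_2$ has a negative eigenvalue and is not an M-matrix.
\end{itemize}
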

\begin{proof} To prove \eqref{eq:wz} we proceed by induction. The property is satisfied for $\ell=0$.
For the inductive step, we assume that $\wh Z_\ell=Z_\ell+\bsu\bz_\ell^\top $ and $\wh W_\ell=W_\ell+\bsu\bw_\ell^\top $. Then, from \eqref{eq:crshifted} we find that
\[
\begin{aligned}
     \wh{W}_{\ell+1}-W_{\ell+1}&=-\wh{W}_\ell\wh{Z}_\ell^{-1}\wh{W}_\ell+{W}_\ell{Z}_\ell^{-1}{W}_\ell\\
    & =W_\ell Z_\ell^{-1}(W_\ell-\wh{W}_\ell)+W_\ell Z_\ell^{-1}(\wh{Z}_\ell-Z_\ell)\wh{Z}_\ell^{-1}\wh{W}_\ell\\
    &\phantom{W_\ell Z_\ell^{-1}(W_\ell-\wh{W}_\ell)+W_\ell Z_\ell^{-1}(\wh{Z}_\ell-Z_\ell)Z_\ell^{-1}}
    +(W_\ell-\wh{W}_\ell)\wh{Z}_\ell^{-1}\wh{W}_\ell\\
    &=-W_\ell Z_\ell^{-1}\bsu\bw_\ell^\top +W_\ell Z_\ell^{-1}\bsu\bz_\ell^\top \wh{Z}_\ell^{-1}\wh{W}_\ell-\bsu\bw_\ell^\top \wh{Z}_\ell^{-1}\wh{W}_\ell.\\
\end{aligned}
\]
Since  $W_\ell Z_\ell^{-1}\bsu=-\frac{1}{2}\bsu$, it follows that 
 \[
  \wh{W}_{\ell+1}-W_{\ell+1}=\frac{1}{2} \bsu \bw_\ell^\top -\bsu (\frac{1}{2}\bz_\ell^\top +\bw_\ell^\top )\wh{Z}_\ell^{-1}\wh{W}_\ell,
 \]
so that 
$
\bw_{\ell+1}^\top =\frac{1}{2} \bw_\ell^\top -(\frac{1}{2}\bz_\ell^\top +\bw_\ell^\top )\wh{Z}_\ell^{-1}\wh{W}_\ell
$.
Similarly we do for $Z_{\ell+1}$ and get
\[
\wh Z_{\ell+1}=Z_\ell+\bsu\bz_\ell^\top + 2 W_{\ell+1}+2\bsu \bw_{\ell+1}^\top =
Z_{\ell+1}+\bsu(\bz_\ell+2\bw_{\ell+1})^\top ,
\]
whence $\bz_{\ell+1}=\bz_\ell+2\bw_{\ell+1}$.
Concerning the limit property, since $\wh Z_\ell-Z_\ell=\bsu\bz_\ell^\top $, taking the limit on both sides, we find that
$\bz:=\lim_{\ell\to\infty}  \bz_\ell$ exists and $ \wh S-S=\bsu\bz^\top $.

To prove that $S= 16 A\wh S^{-1}$, we observe that since $A^{1/2}\bsu=0$ and $\wh S\bsu=\bsu(\bz^\top\bsu)$, then $\wh S^2=16A+\bsu\bz^\top \wh S$. Whence we get $\wh S=16A\wh S^{-1}+\bsu\bz^\top $, so that $16A\wh S^{-1}=
\wh S-\bsu\bz^\top =S$. Therefore $A^{1/2}=\frac{1}{4} S=4A\wh S^{-1}$.

Finally, if $\wh{W}_0\le 0$ and $\wh{Z_0}$ is an M-matrix, we proceed by induction on $\ell$ and show that 
\begin{equation}\label{eq:ind}
\wh W_\ell\le0,\quad \wh Z_\ell \hbox{ is an M-matrix}.
\end{equation}
By assumption, if $\ell=0$, equations \eqref{eq:ind} are satisfied. Let us prove the induction step. Assume equations \eqref{eq:ind}  are true for $\ell$ and prove them for $\ell+1$. Since $\wh Z_\ell$ is a 
nonsingular M-matrix then $\wh Z_\ell^{-1}\ge 0$. Since $\wh W_\ell\le 0$ then $\wh W_{\ell+1}=-\wh W_\ell\wh Z_\ell^{-1}\wh W_\ell\le 0$. Since $\wh 
W_{\ell+1}\le 0$ and $\wh Z_\ell$ is an M-matrix, then $\wh Z_{\ell+1}=\wh 
Z_\ell+2\wh W_{\ell+1}$ is a Z-matrix. Let us prove that $\wh Z_{\ell+1}$ is an M-matrix. By Lemma \ref{lem:12} we have $\wh 
Z_{\ell+1}\bsu=\frac{\zeta_\ell^2-2\omega_\ell^2}{\zeta_\ell}\bsu$. On the other hand, by Lemma \ref{lem:12}, $0<\frac{-\omega_\ell}{\zeta_\ell}
<\frac{1}{2}$ so that $\zeta_\ell^2>4\omega_\ell^2$, therefore, $\zeta_\ell^2-
2\omega_\ell^2>0$ and $\wh Z_{\ell+1}\bsu> 0$. Finally, since $\wh Z_{\ell+1}$ is a Z-matrix for which there exists a vector $\bsu>0$ such that $\wh 
Z_{\ell+1}\bsu> 0$ then it is an M-matrix \cite[Theorem 2.3, I27]{bp:book}.
\end{proof}

If $\wh{W}_0\le 0$ and $\wh Z_0$ is an M-matrix, then
from equation \eqref{eq:lem}, we see that the triplet representation for $\wh{Z}_\ell$ is $\wh{Z}_\ell=(-\offdiag(\wh{Z}_\ell), \bsu, \zeta_\ell\bsu)$.
Moreover, if $\sigma=0$, then the sequences $\wh{W}_\ell$ and $\wh{Z}_\ell$ coincide with the sequences $W_\ell$ and $Z_\ell$, generated by CR without shift, equation \eqref{eq:lem} still holds and we have  $Z_\ell\bsu=2^{-\ell+1}\bsu $ and $W_\ell\bsu =-2^{-\ell}\bsu $, we thus have $W_\ell Z_\ell^{-1}\bsu=-\frac{1}{2}\bsu$.

Generally, the matrix $\wh Z_0$ is not necessarily an M-matrix and $\wh{W}_0$ is not necessarily nonpositive, once the shift technique is applied. But if $A$ has a column formed by all nonzero off-diagonal entries, it is possible to choose $\sigma$ and $\bw$ so that $\wh Z_0$ is an M-matrix and $\wh{W}_0\le 0$ so that, by Theorem \ref{th:mainshift}, all the matrices $\wh{Z}_\ell$ are M-matrices and $\wh{W}_\ell\le 0$ for any $\ell\ge 0$. 

To show this property, assume for simplicity that $A=I-C$, $\rho(C)=1$  and that for a given $j$, we have $c_{i,j}\ne 0$, for $i=1,\ldots,n$, $j\ne i$ and $ c_{i,i}\le 1$. 
Let $\gamma=\min_{i,\, i\ne j} c_{i,j}$, then $\gamma\le 1$ and choose $\sigma \le \gamma$. This way, it is easy to show that for $\bw=\frac{1}{u_j}\be_j$, the matrix $C-Q\geq 0$ so that $\wh W_0=-C+Q\le 0$. Moreover, since $\wh Z_0\bsu=(2-\sigma)\bsu$, the condition $\sigma<1$ guarantees that $\wh Z_0$ is an M-matrix. 
Observe that a triplet for the matrix $\wh Z_\ell$ is given by Lemma \ref{lem:12} in the form $(-\offdiag(Z_\ell),\bsu,\zeta_\ell\bsu)$.

\begin{remark}
\rm From the relation $\wh Z_\ell=Z_\ell+\bsu\bz_\ell^\top $, since $Z_\ell\rightarrow S$ linearly while $\wh Z_\ell\rightarrow\wh S$ quadratically, one deduces that $\bz_\ell\rightarrow \bz$ {\em linearly}. Therefore, equations \eqref{eq:wz} cannot be used to compute $A^{1/2}$ in a small number of steps.
    On the other hand, equation $A^{1/2}=\frac{1}{4} A\wh S^{-1}$ is an effective way to compute $A^{1/2}$ through the computation of $\wh S$, which requires a small number of iterations, and its subsequent inversion. However, the problem of implementing the latter formula without encountering numerical cancellation remains unsolved.
\end{remark}

To sum up, we may modify Algorithm \ref{alg:cr} to perform the shifted version of CR by means of triplets. The computation is described in Algorithm \ref{alg:crshifted}, where all the steps are cancellation-free except for the initial step, where $W_0=-C+Q$ is computed, and the last step, where $A^{1/2}$ is computed relying on $A^{1/2}= A\wh Z_{\ell+1}^{-1}$ and cancellation might occur. However, for this step, we can compute the LU factorization of $Z_{\ell+1}$ using the GTH algorithm and then solve two triangular systems.

\begin{algorithm} 
\sml
\textbf{Input:} A triplet representation $(-\offdiag(A),\bsu,0)$ of the singular M-matrix $A$, where $\bsu>0$, $A\bsu= 0$.

\textbf{Output:} The entries of $Z=A^{1/2}$ and its triplet representation $(-\offdiag(Z),\bsu,0)$.

Set $s=\gamma\max_i a_{ii}$, (default: $\gamma=4$)\;
$A\gets A/s$, $\bv\gets \bv/s$\;

Set $C=I-A$\;
Select a column of $C$ formed by all nonzero entries, let it be the $j$th column. If it does not exist, then exit displaying an error message\;

Compute $\gamma = \min_{i}|c_{i,j}|$, set $\sigma = \gamma$, $\bw =\frac{1}{u_j}\be_j$, $Q=\sigma \bsu\bw^\top $\;

Set $W_0=-C+Q$, $Z_0=2(I+A)-Q$, $\omega_0=\sigma-1$, $\zeta_0=2-\sigma$\;

\For{$\ell=0,1,2,\ldots$}{
     given the triplet $(-\offdiag(Z_\ell),\bsu,\zeta_\ell\bsu)$ compute $G_\ell=Z_\ell^{-1}W_\ell$ by means of  Algorithm \ref{alg:gth}\;
     $W_{\ell+1}=-W_\ell G_\ell$\;
     $Z_{\ell+1}=Z_\ell+2W_{\ell+1}$\;
     $\omega_{\ell+1}=-\frac{\omega_\ell^2}{\zeta_\ell}$, $\zeta_{\ell+1}=\zeta_\ell+2\omega_{\ell+1}$\;
     \If{$\|W_{\ell+1}\|\le\varepsilon$}{exit\;}
    }

Given the triplet $(-\offdiag(Z_{\ell+1}),\bsu,\zeta_{\ell+1}\bsu)$ compute $Z=\frac{1}{4} AZ_{\ell+1}^{-1}$ by means of  Algorithm \ref{alg:gth} suitably adjusted to right multiplication for the inverse matrix. 

Set  $Z\gets Zs^{1/2}$.
\caption{\sml Computing $A^{1/2}$ by means of CR, where $A$ is a singular M-matrix}\label{alg:crshifted} 

\end{algorithm}

\subsection{The Incremental Newton iteration}\label{sec:in}
The Incremental Newton (IN)  algorithm is a variation of Cyclic Reduction designed in \cite{Iannazzo2003273}. It is defined as follows
\begin{equation}\label{eq:in}
    \begin{array}{ll}
       X_{\ell+1}=X_\ell+F_\ell,&X_0=A,\\[1ex]
       F_{\ell+1}=-\frac{1}{2}F_\ell X_{\ell+1}^{-1} F_\ell, &F_0=\frac{1}{2}(I-A),
    \end{array}
\end{equation}
where $\lim_{\ell\to\infty}  X_\ell=A^{1/2}$ and $\lim_{\ell\to\infty}  F_\ell=0$.
If $A$ fulfills Assumption \ref{ass:2}, then we can show that $X_\ell$ is an M-matrix for $\ell\ge 0$. We recall the following relations between $X_\ell$ and $F_\ell$ and the matrices $Z_\ell$ and $W_{\ell+1}$ generated by Cyclic Reduction (see \cite{higham:book})
\[
F_{\ell+1}=\frac{1}{2} W_{\ell+1},\quad
X_{\ell+1}=\frac{1}{4} Z_\ell.
\]
Since we know that $Z_\ell$ is an M-matrix and $W_\ell\le 0$ for $\ell\ge 0$, then also $X_\ell$ is an M-matrix and $F_\ell\le 0$ for any $\ell\ge 0$. Moreover, equation \eqref{eq:in} allows us to
provide 
a triplet version of the IN algorithm. 
Replacing the expressions of $W_\ell$ and $Z_\ell$ given by \eqref{eq:in} in \eqref{eq:trip}, we get for $\ell\geq 1$ that 
\begin{equation}\label{eq:in1}
\begin{split}
&\bv_{\ell+1}=X_{\ell+1}\bsu=\frac{1}{4} \bp_{\ell}-F_\ell\bsu, \quad \bp_1=8(I+A)^{-1}\bv,\\
&\bp_{\ell+1}=\bp_\ell-F_\ell X_{\ell+1}^{-1}\bp_\ell.
\end{split}
\end{equation}

The vectors $\bsu$ and $\bv_{\ell+1}$ provide a triplet for the M-matrix $X_{\ell+1}$. Moreover, the above expression for $\bv_{\ell+1}$ can be computed with no cancellation since $F_\ell\le 0$ so that both $\bp_\ell$ and $-4F_\ell\bsu$ are nonnegative.
Observe also that the recursion for $\bp_{\ell+1}$ is obtained through a sum of nonnegative vectors so that it does not generate cancellation.

To implement the algorithm, we have to treat differently the case where $\ell=0$ in fact, \eqref{eq:in1} cannot be applied for $\ell=0$ and must be replaced by $\bp_1=8(I+A)^{-1}\bv$, $X_1\bsu=\frac{1}{2}(\bsu+\bv)$.
In Algorithm \ref{alg:in}, we provide the triplet version of the IN iteration.

\begin{algorithm}
\sml
\textbf{Input:} A triplet representation $(-\offdiag(A),\bsu,\bv)$ of the M-matrix $A$, where $\bsu>0$, $\bv=A\bsu\ge 0$.

\textbf{Output:} The entries of $X=A^{1/2}$  and its triplet representation $(-\offdiag(Z),\bsu,\bw)$.

Set $s=\gamma\max_i a_{ii}$, (default: $\gamma=4$)\;
$A\gets A/s$, $\bv\gets \bv/s$\;

    Set $F_0=\frac{1}{2}(I-A)$, $X_0=A$, $\bp_0=4\bv$, $\bv_0=\bv$, $\widehat {\bv}=\bv$\;
    
    \For {$\ell=0,1,\ldots$}{
       $X_1=X_0+F_0$; 
       
    \If{$\ell=0$} {$\bv_1=\frac{1}{2}(\bsu+\bv)$, $F_1=-\frac{1}{2}F_0X_1^{-1}F_0$, where Algorithm 1 is applied given the triplet $(-\offdiag(X_1),\bsu, \bv_1)$\;}
    
    \If{$\ell=1$} {$\bp_2=8(I+A)^{-1}\bv$, $\bv_2=\frac{1}{4}(\bp_2-4F_1\bsu)$, where $\bp_2$ is computed by means of Algorithm \ref{alg:gth} given the triplet $(-\offdiag(A),\bsu,\bsu+\bv)$\;}
    
    \If {$\ell>1$} {$\bp_{\ell+1}=\bp_\ell-F_{\ell-1}\bg_\ell$, $\bv_{\ell+1}=\frac{1}{4}(\bp_{\ell+1}-4F_\ell\bsu)$\;}

    $[G_\ell,\bg_{\ell+1}]=X_{\ell+1}^{-1}[F_\ell,\bp_{\ell+1}]$,
    where Algorithm \ref{alg:gth} is applied given the triplet $(-\offdiag(X_{\ell+1}),\bsu,\bv_{\ell+1})$\;    
    $F_{\ell+1}=-\frac{1}{2} F_\ell G_\ell$\;
    
    \If {$\|F_{\ell+1}\|\le\varepsilon$} {exit\;}
    }
    $\bd=(\bv_{\ell+1}-\offdiag(X_{\ell+1})\bsu)\oslash\bsu$\;
    
    $X=\hbox{diag}(\bd)+\offdiag(X_{\ell+1})$,\quad $\bw=\bv_{\ell+1}$;

    $\bw\gets \bw s^{1/2}$,\quad $X\gets X s^{1/2}$.

    \caption{\sml Computing $A^{1/2}$ by means of IN}\label{alg:in}
    
\end{algorithm}

Observe that, also for this algorithm, there is potential cancellation in the computation of $F_0$ in the case where $A$ is affected by some error. However, this cancellation can be kept under control as done for the CR, replacing $A$ with $A/s$ with $s>0$ sufficiently large constant (this is needed also to fulfill Assumption~\ref{ass:2}). 

\begin{remark}\label{rem:3}\rm
If the matrix $A$ is singular, say, is the Laplacian of a graph, then there exists $\bsu>0$ such that $A\bsu = 0$, so that $\bv = 0$. Consequently, $\bp_\ell = 0$ and $\bv_\ell = -F_\ell\bsu$ for $\ell=1,2,\ldots$ Moreover, since $F_0\bsu=\frac{1}{2}\bsu$ and $A\bsu = 0$, from \eqref{eq:in} one can inductively prove that $X_\ell\bsu = \frac{1}{2^\ell}\bsu$, $F_\ell\bsu=-\frac{1}{2^{\ell+1}}\bsu$. On the other hand, this fact simplifies the implementation of the algorithm and shows that the convergence of $F_\ell$ to $0$ is only linear.
\end{remark}

As observed in Remark \ref{rem:3}, if $A$ is singular, then the convergence slows to linear. As already done in Section \ref{sec:shift} for the case of CR, we may overcome the slowdown by applying the shift technique.

\section{Error analysis}\label{sec:numan}
In this section, we outline some results useful to perform the error analysis of a single step of the algorithms presented in the previous sections when implemented in floating-point arithmetic with machine precision $\varepsilon$. In the expressions of the error bounds, we consider only the linear part in $\varepsilon$, neglecting the terms containing $\varepsilon^2$. To simplify the notation, we write $a\doteq b$ if $b-a$ is proportional to $\varepsilon^2$; similarly, we write $a\dotle b$ if the coefficient of $\varepsilon$ in the difference $b-a$ is nonnegative.
Moreover, we denote by fl$(\cdot)$ the result of the expression between parentheses obtained in floating point arithmetic.
In our complexity bounds, we consider only the asymptotically dominant expression. For instance, in the bound $\psi(n)=2n^3+3n^2+n+1$ we consider only the asymptotic dominant term $2n^3$, and we write $\psi(n)\cong 2n^3$.

We recall that if $\widehat a=a(1+\delta)$ then $\delta=(\wh a-a)/a$ is the relative error in the approximation of $a$ by $\wh a$. We also recall that
$\fl(a*b)=(a*b)(1+\delta)$, $|\delta|\le \varepsilon$
where $*$ is any of the four arithmetic operations $+,-,\times,/$.

Given a positive integer $n$, denote $M(n)=\lceil\log_2 n\rceil$, and consider the following recursive algorithm to compute $w(a_1,\ldots,a_n)=\sum_{i=1}^n a_i$.

\begin{algorithm}\label{alg:sum}\rm
\sml
{\bf Input}: $a_i\in\mathbb R$, $i=1,2,\ldots,n$.

{\bf Output}: $w(a_1,\ldots,a_n)=\sum_{i=1}^n a_i$

\begin{enumerate}
    \item if $n=1$, output $w(a_1)=a_1$;
    \item if $n=2$ output $w(a_1,a_2)=a_1+a_2$;
    \item if $n>2$  compute $s_1=w(a_1,\ldots,a_m)$, $s_2=w(a_{m+1},\ldots,a_n)$, where $m=\lceil \frac {n}{2}\rceil$;
    \item output $s_1+s_2$.
\end{enumerate}
\caption{\sml Binary tree sum}
\end{algorithm}

We have the following simple result. For the sake of completeness, we report the standard proof.
\begin{lemma}
Let $\wh w=\fl(\sum_{i=1}^n a_i)$ be the  result obtained by applying Algorithm~\ref{alg:sum} in floating point arithmetic.   Then $\wh w= \sum_{i=1}^n a_i(1+\delta_i^{(n)})$, $|\delta_i^{(n)}|\dotle M(n)\varepsilon$. Moreover, if $a_i\ge 0$, $i=1,\ldots,n$, then the relative forward error is bounded by $|(\wh w-w)/w|\dotle M(n)\varepsilon$.
Similarly, we have $\fl(\sum_{i=1}^na_ib_i)=\sum_{i=1}^n a_ib_i(1+\theta_i^{(n)})$, with $|\theta_i^{(n)}|\dotle (M(n)+1)\varepsilon$, and if $a_ib_i\ge 0$, $i=1,\ldots,n$, then also the forward error is bounded from above  by $(M(n)+1)\varepsilon$.
\end{lemma}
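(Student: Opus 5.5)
The proof will go by strong induction on $n$, following the recursive structure of Algorithm~\ref{alg:sum}. The claim to establish is that $\wh w=\sum_{i=1}^n a_i(1+\delta_i^{(n)})$ with $|\delta_i^{(n)}|\dotle M(n)\varepsilon$, where $M(n)=\lceil\log_2 n\rceil$. For $n=1$ no operation is performed, so $\delta_1^{(1)}=0=M(1)\varepsilon$. For $n=2$ a single addition gives $\fl(a_1+a_2)=(a_1+a_2)(1+\delta)$ with $|\delta|\le\varepsilon=M(2)\varepsilon$.

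For $n>2$, let $m=\lceil n/2\rceil$. By the inductive hypothesis, $\wh s_1=\sum_{i\le m}a_i(1+\delta_i^{(m)})$ and $\wh s_2=\sum_{i>m}a_i(1+\delta_i^{(n-m)})$. The final addition gives $\wh w=(\wh s_1+\wh s_2)(1+\delta)$, so that $1+\delta_i^{(n)}=(1+\delta_i^{(\cdot)})(1+\delta)$. Hence $|\delta_i^{(n)}|\dotle (\max(M(m),M(n-m))+1)\varepsilon$. The one point requiring an actual check is the index bound: since $n-m\le m=\lceil n/2\rceil$, we have $M(m)=\lceil\log_2\lceil n/2\rceil\rceil=\lceil\log_2 n\rceil-1$. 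This follows from $2^{k-1}<\lceil n/2\rceil\le 2^k \iff 2^k<n\le 2^{k+1}$, checked separately for $n$ even and $n$ odd. Therefore $M(m)+1=M(n)$.

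For the forward error with $a_i\ge0$, we have $|\wh w-w|=|\sum a_i\delta_i^{(n)}|\le\sum a_i|\delta_i^{(n)}|\dotle M(n)\varepsilon\sum a_i=M(n)\varepsilon\, w$. Here nonnegativity is what allows $\sum|a_i|$ to be replaced by $|w|$.

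For the dot product, each product is first computed as $\fl(a_ib_i)=a_ib_i(1+\eta_i)$ with $|\eta_i|\le\varepsilon$, and the summation result is then applied to the terms $a_ib_i(1+\eta_i)$. This gives $1+\theta_i^{(n)}=(1+\eta_i)(1+\delta_i^{(n)})$, so $|\theta_i^{(n)}|\dotle(M(n)+1)\varepsilon$ to first order. The forward bound for $a_ib_i\ge0$ follows exactly as in the previous paragraph. The only mildly delicate step is the ceiling identity $M(\lceil n/2\rceil)=M(n)-1$; everything else is routine first-order bookkeeping, dropping the $O(\varepsilon^2)$ products.
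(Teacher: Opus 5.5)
Your proof is correct and follows the same route as the paper. Both use strong induction along the binary-tree recursion, compose the final addition's error with the inductive ones to first order, use nonnegativity for the forward bound, and add one factor $(1+\eta_i)$ for the dot product.

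Your explicit check that $M(\lceil n/2\rceil)=M(n)-1$ is more careful than the paper's argument. The paper writes $\log_2 m\le\log_2\frac{n}{2}$, which fails for odd $n$ because there $m=\frac{n+1}{2}>\frac{n}{2}$, even though the conclusion $M(m)\le M(n)-1$ is true. You also handle the $n=1$ subcase, which the recursion reaches (e.g.\ $n=3$) and the paper leaves implicit.
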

\begin{proof} To prove the expression of $\wh w$, 
we proceed by induction. For $n=2$ we have $\widehat w=\fl(a_1+a_2)=(a_1+a_2)(1+\delta)$, $|\delta|\le\varepsilon=M(2)\varepsilon$.
For the inductive step, we have $\wh w=(\wh s_1+\wh s_2)(1+\delta)$, $|\delta|\le\varepsilon$.
By the inductive assumption, we have $\wh s_1=\sum_{i=1}^ma_i(1+\delta_i^{(m)})$, $\wh s_2=\sum_{i=m+1}^{n}a_i(1+\delta_i^{(n-m)})$, with $|\delta_i^{(m)}|\dotle M(m)\varepsilon$, $|\delta_i^{(n-m)}|\dotle M(n-m)\varepsilon$, so that $\wh w= \sum_{i=1}^n a_i(1+\delta_i^{(n)})$, where $\delta_i^{(n)}\doteq \delta_i^{(m)}+\delta$ for $i=1,\ldots,m$ and $\delta_i^{(n)}\doteq \delta_i^{(n-m)}+\delta$ for $i=m+1,\ldots,n$. The proof is completed since $\log_2 m\le \log_2\frac {n}{2}=\log_2n-1$, so that $M(m)=\lceil\log_2 m\rceil\le \lceil\log_2 n\rceil-1=M(n)-1$ and
the same bound holds for $M(n-m)$ since $n-m\le m$.
Concerning the forward error, if $a_i\ge 0$, we have the upper bound $|\sum_{i=1}^n\delta_i^{(n)}a_i|/\sum_{j=1}^n a_j\le \max_i |\delta_i^{(n)}|\cdot \sum_{i=1}^n a_i/\sum_{j=1}^n a_j\dotle M(n)\varepsilon$. 
For the scalar product $\sum_{i=1}^n a_ib_i$, we proceed similarly.
\end{proof}

In \cite{alfa} it is proved that if $\wh x$ is the floating point vector obtained by the execution of Algorithm \ref{alg:gth} applied to the system $Ax=b$, then $|\wh x-x|\dotle\varphi(n) |x|\varepsilon$, where $\varphi(n)\cong \frac{4}{3} n^3$ and the summations are computed with the customary algorithm. On the other hand, if summations are computed using Algorithm \ref{alg:sum}, then it can be proved that the error bound holds with $\varphi(n)\cong \frac{4}{3} n^2\lceil\log_2 n\rceil$.

The main operations performed by the algorithms described in the previous sections are essentially two, namely, the solution of $n\times n$ linear systems with an M-matrix, and the multiplication of two nonnegative matrices. The relative error in the former operation is amplified by the factor $\varphi(n)\cong\frac{4}{3} n^2\lceil\log_2 n\rceil$. The latter operation generates a relative error that is bounded by $(M(n)+1)\varepsilon$ if the sum in the  row-by-column products is computed using Algorithm \ref{alg:sum}.
Therefore, the overall relative error in one step of the algorithms based on triplets is of the order of $(n^2\log_2 n)\varepsilon$.

According to the analysis of \cite{poloniCR}, this worst-case error bound would imply a relative error in the matrices generated by CR of the order $2^\ell\varphi(n)$, that is, an exponential growth as a function of the step $\ell$. From the numerical experiments that we present in the next section, it turns out that this exponential growth of the error is never encountered.

\section{Numerical experiments}\label{sec:numexp}

In this section, we present the results of some numerical experiments that we have performed in MATLAB, version
9.11.0.2358333, 
on a platform using the Linux Ubuntu operating system, with Intel i3-7100 CPU \@ 3.90GHz $\times$ 4, and 16 GB RAM.

The matrices in our tests are singular irreducible M-matrices and nonsingular M-matrices. In all the tests, we computed the principal matrix square root in high-precision arithmetic relying on the toolbox Advanpix and then in standard floating point precision using the different available algorithms. In all the numerical experiments, the component-wise relative errors have been computed using the command 
{\tt E = abs((S-Smp)./Smp);} where {\tt Smp} is the matrix square root computed in high precision with the command {\tt sqrtm}, while {\tt S} is the matrix square root computed in standard precision with the tested algorithms.

In the tables, we denote Algorithms \ref{alg:cr}, \ref{alg:crshifted}, and \ref{alg:in} by {\tt CR}, {\tt CR-sing}, and {\tt IN} respectively. Moreover, {\tt sqrtm} denotes the MATLAB command for the matrix square root, while {\tt CR-stnd} is the standard implementation of Cyclic Reduction where the MATLAB command backslash is used to compute expressions of the type $M^{-1}N$.

\subsection*{Test 1: Graph Laplacian} As a first test, we consider the $n\times n$ Laplacian $A=D-C$ of a directed network, where $C$ is the adjacency matrix and $D=\operatorname{diag}(\bd)$ is the diagonal matrix with the vector $\bd=C\ones$ on the diagonal, so that $A\bsu = 0$ for $\bsu = \ones$. In this example $C=(c_{ij})$ is the companion matrix  associated with the polynomial $p(x)=x^n-\sum_{i=0}^{n-1}x^i$, that is,  
$c_{ij}=1$ if $j=i-1$, $c_{in}=1$, while $c_{ij}=0$, otherwise.

In Table \ref{tab:frob_err}, we report the maximum relative component-wise error obtained in for several values of $n$ for the different algorithms we have analyzed. In Figure \ref{fig:sol} we display, in log scale, the absolute value of the solution computed by the CR algorithm based on triplets, and the one computed by the MATLAB command {\tt sqrtm}, while in Figure \ref{fig:err} we display the log plot of the relative errors of the different algorithms. 

\begin{table}
    \centering \footnotesize
    \begin{tabular}{c|ccc|cc}
$n$ &  CR      & IN       & CR-sing  & CR-stnd  &{\tt sqrtm}\\ \hline
10  & 7.7e-16  & 7.7e-16  & 5.2e-15  & 9.2e-09  & 2.3e-08   \\ 
20  & 1.3e-15  & 1.3e-15  & 1.3e-14  & 1.9e-08  & 6.5e-08   \\ 
50  & 2.9e-15  & 2.9e-15  & 5.5e-14  & 2.5e-08  & 1.3e+01   \\ 
100 & 1.8e-15  & 1.8e-15  & 1.2e-13  & 4.1e-08  & 1.2e+16   \\
200 & 6.3e-15  & 6.3e-15  & 2.1e-13  & 3.5e-08  & 1.3e+46   \\ 
400 & 9.8e-15  & 9.83e-15 & 4.4e-13  & 2.0e-08  & 1.3e+107  \\  
    \end{tabular}
    \caption{\rm Component-wise relative errors for Test 1.}
    \label{tab:frob_err}
\end{table}

We may notice that, since the solution has both large and small entries in modulus, the solution computed by the MATLAB command {\tt sqrtm} is affected by very large relative errors in the smallest modulus entries. In contrast, the solutions computed by the algorithms based on the triplet representation provide highly accurate results. Standard CR, implemented without using triplets, produces larger relative errors of the order of the square root of the machine precision. Figure \ref{fig:err} gives an idea of the error distribution. Observe also that Cyclic Reduction and Incremental Newton perform the same, and quite well, since they are slightly different implementations of the same algorithm.

\begin{table}
    \centering \footnotesize
    \begin{tabular}{c|ccc|cc}
$n$ &  CR      & IN       & CR-sing  & CR-stnd  &{\tt sqrtm}\\ \hline
10  &  1.1e-02 &  2.3e-02 &  3.7e-03 &  1.2e-03 &  6.7e-04 \\ 
20  &  1.6e-02 &  3.8e-02 &  1.4e-02 &  3.3e-03 &  7.2e-03 \\ 
50  &  4.9e-02 &  4.2e-02 & 5.5e-14  &  9.9e-03 &  5.5e-03 \\ 
100 &  1.1e-01 &  1.6e-01 &  2.8e-02 &  1.8e-02 &  1.4e-02 \\
200 &  5.6e-01 &  7.2e-01 &  1.0e-01 &  3.0e-02 &  5.7e-02 \\ 
400 &  4.7e+00 &  4.9e+00 &  8.6e-01 &  2.3e-01 &  1.8e-01\\  
    \end{tabular}
    \caption{\rm CPU time, in seconds, for Test 1.}
    \label{tab:frob_cpu}
\end{table}

\begin{figure}
\centering
\includegraphics[scale=0.4]{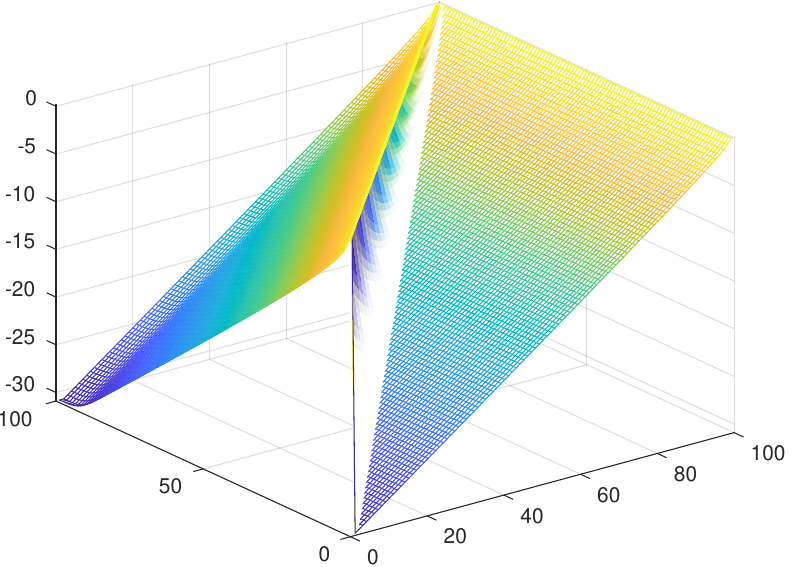}
\includegraphics[scale=0.4]{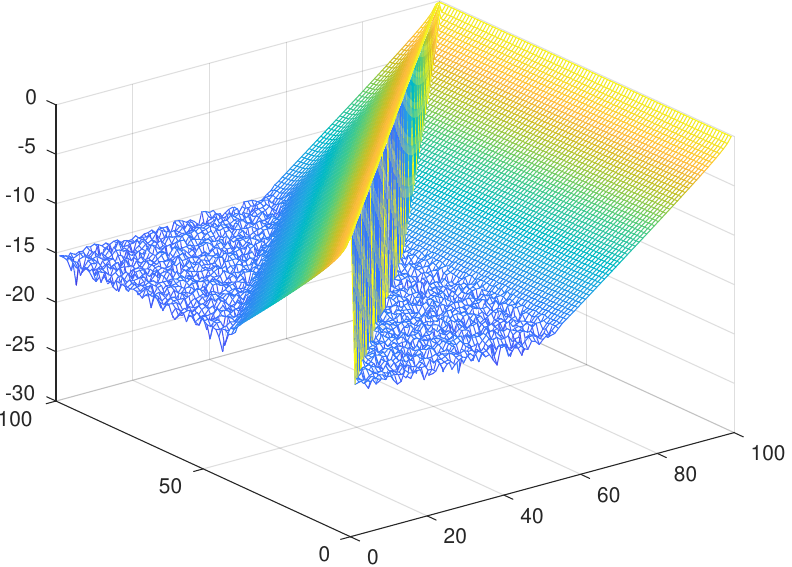}
\caption{Log plot of the absolute value of the matrix square root in Test 1 with $n=100$, computed by means of 
Algorithm \ref{alg:cr} (left), and by the MATLAB command {\tt sqrtm} (right).}\label{fig:sol}
\end{figure}

\begin{figure}
\centering
\includegraphics[scale=0.4]{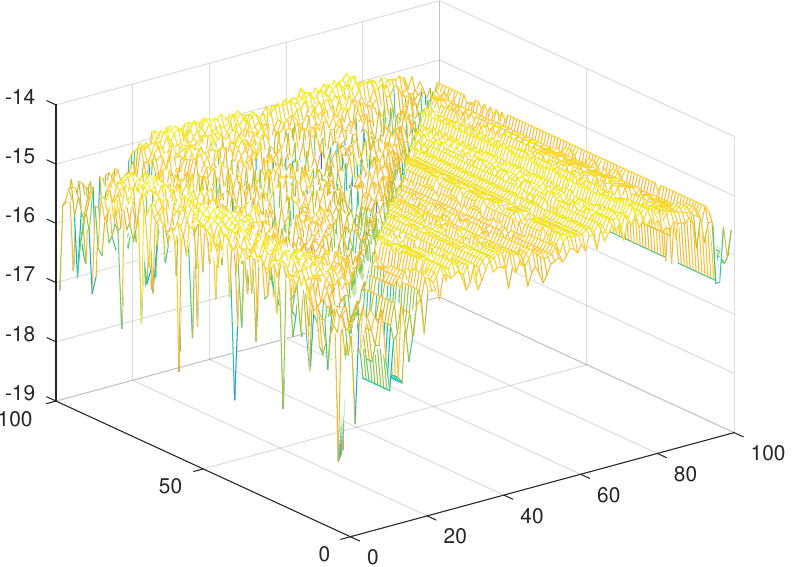}
\includegraphics[scale=0.4]{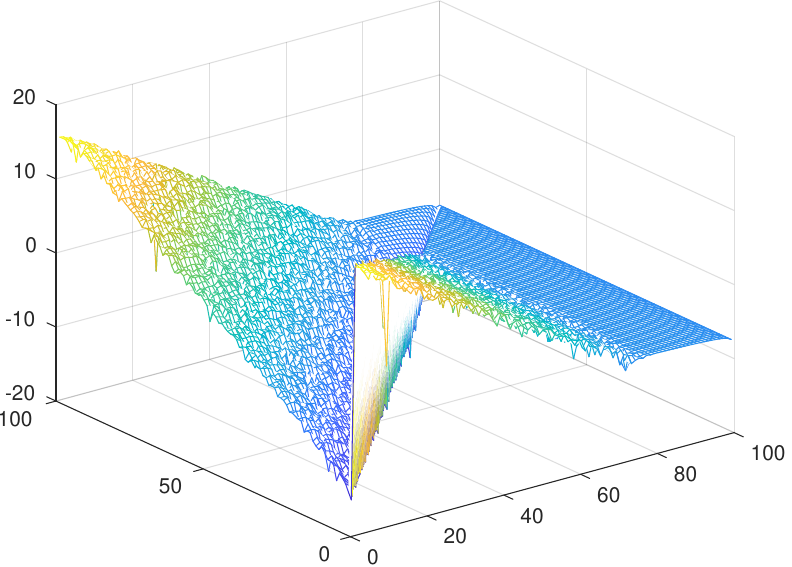}\\\includegraphics[scale=0.4]{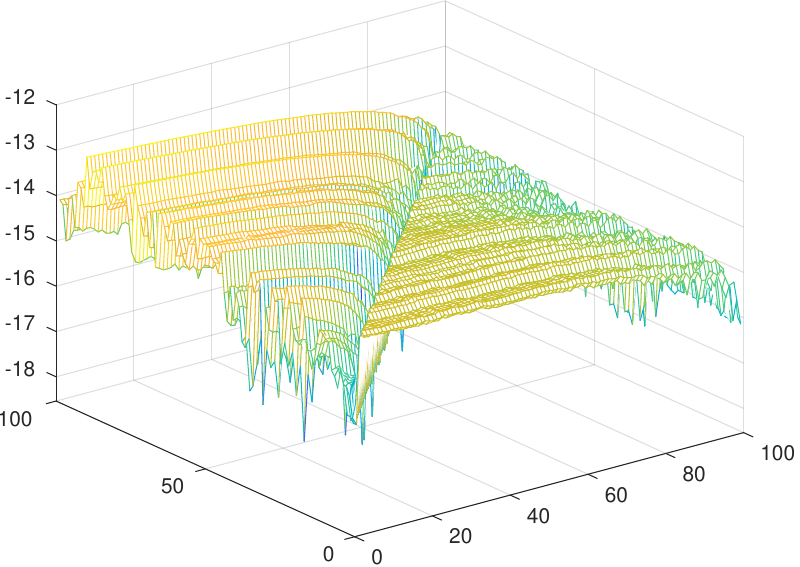}
\includegraphics[scale=0.4]{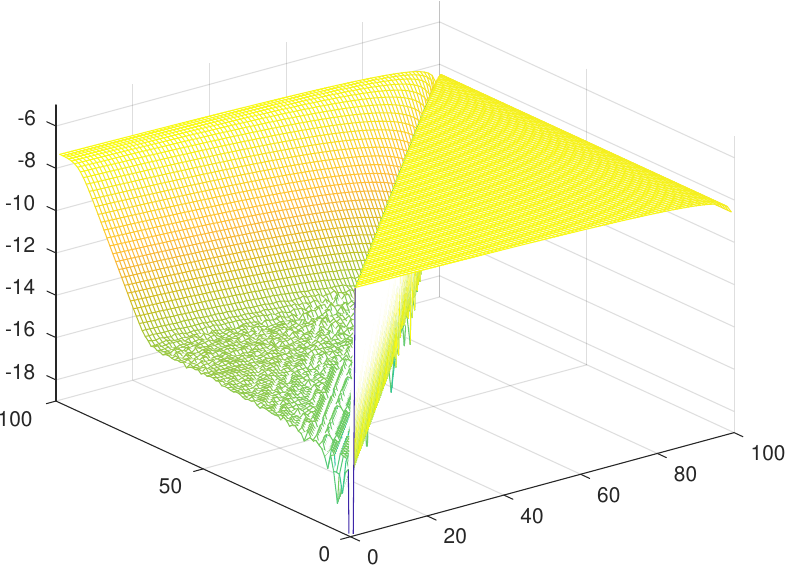}
\caption{Log plot of the relative errors in the matrix square root of Test 1 with $n=100$, computed by means of Algorithm \ref{alg:cr} (top left), by the Matlab command {\tt sqrtm} (top right),  by Algorithm \ref{alg:crshifted} (bottom left), and by standard CR (bottom right).}\label{fig:err}
\end{figure}

The  CPU time needed by these algorithms is reported in Table \ref{tab:frob_cpu}. We may observe that {\tt sqrtm} has the shortest time in almost all the runs. This is mainly because this command is built in the MATLAB package and relies on the LAPACK subroutines. The CPU time needed by our implementation is mainly taken by Algorithm \ref{alg:gth}, more precisely by the update of the off-diagonal entries of the matrix $U$.
Indeed, this computation does not rely on the LAPACK subroutines. 
Observe that, as expected, the accelerated version of CR given in Algorithm \ref{alg:crshifted} is faster than CR based on triplets implemented in Algorithm \ref{alg:cr}. 

However, this algorithm may suffer numerical instability in cases where the matrix $Z$ inverted at the end of the procedure is very ill-conditioned. This happens typically when the vector $\bsu $ in the kernel of $A$ has very unbalanced components, i.e., when the ratio $\varepsilon= \min u_i/\max u_i$ is small. This is shown in the next Test 2.

\subsection*{Test 2: Unbalanced singular M-matrix} The second test concerns a singular irreducible M-matrix $A$ such that $A\bsu=0$, $\bsu>0$, $\min u_i / \max u_i = \varepsilon$, for different values of $\varepsilon<1$.
Observe that $\bsu$ has unbalanced components when $\varepsilon$ takes small values. In this case, we expect that the matrix $Z$ generated by Algorithm \ref{alg:crshifted}, which must be inverted in the last step, is ill-conditioned, the smaller $\varepsilon$. The matrices of this test are obtained with $A=D-B$, where $B$ is a given nonnegative matrix, $D=\mbox{diag}(\bd)$, $\bd=(d_i)$, $d_i=w_i/u_i$, $\bw=B\bsu$. In our experiments, we have chosen $u_1=\varepsilon$, $u_i=1$ for $i>1$, and $B=(b_{ij})$, the upper Hessenberg matrix with unit entries, i.e., such that $b_{ij}=1$ for $i\le j+1$, $b_{ij}=0$ elsewhere.

We have chosen values of the unbalancing ratio $\varepsilon$ of the components of $\bsu$ in the set $\{10^{-2}, 10^{-5}, 10^{-8}, 10^{-11}, 10^{-14}\}$, and we have set $n=100$. Table \ref{tab:tst2} reports the values of the maximum relative error, of the different algorithms, together with the values of cond$(Z)$.

\begin{table} 
	\centering \footnotesize
	
	\begin{tabular}{c|ccc|cc|c}
		$\varepsilon$ &  CR      & IN       & CR-sing  & CR-stnd  &{\tt sqrtm}& cond$(Z)$\\ \hline
		$10^{-2}$  &  2.0e-14 &  2.0e-14 &  8.0e-12 &  1.6e-08 &  2.1e+110& 8.8e+02\\ 
		$10^{-5}$  &  1.0e-14 &  1.0e-14 &  6.7e-09 &  5.1e-06 &  1.1e+58& 2.8e+04 \\ 
		$10^{-8}$  &  2.3e-13 &  2.3e-13 &  4.39e-06 & 3.7e-02 &  4.3e+58&8.8e+05 \\ 
		$10^{-11}$ &  1.0e-13 &  1.0e-13 &  6.6e-03 &  2.8e+03 &  3.1e+139& 2.8e+07 \\
		$10^{-14}$ &  3.2e-14 &  3.2e-14 &  4.3e+03 &  4.2e+07 &  3.7e+47& 2.9e+16 \\  
	\end{tabular}
	\caption{\rm Component-wise relative errors for Test 2, together with the condition number of the matrix $Z$ in Algorithm \ref{alg:crshifted}, for different values of the unbalancing ratio $\varepsilon$ and with $n=100$.}
	\label{tab:tst2}
\end{table}

We may see that while the algorithms CR and IN provide high precision solutions independently of the unbalancing factor $\varepsilon$, the maximum component-wise relative error of Algorithm \ref{alg:crshifted} grows as $\varepsilon$ decreases. This behavior was expected since in the last step of the algorithm, the solution is obtained by multiplying the matrix $A$ by $Z^{-1}$, and the latter matrix has a condition number that grows as $\varepsilon$ decreases.

Observe also that the maximum component-wise relative error in the solution obtained with the MATLAB command {\tt sqrtm} reaches very large values. On one hand, this is because $A^{1/2}$ has some entries which are much below the machine precision. On the other hand, in this case, the command {\tt sqrtm} seems to fail in computing the smallest lower triangular entries of $A^{1/2}$.

\begin{figure}
	\centering
	\includegraphics[scale=0.4]{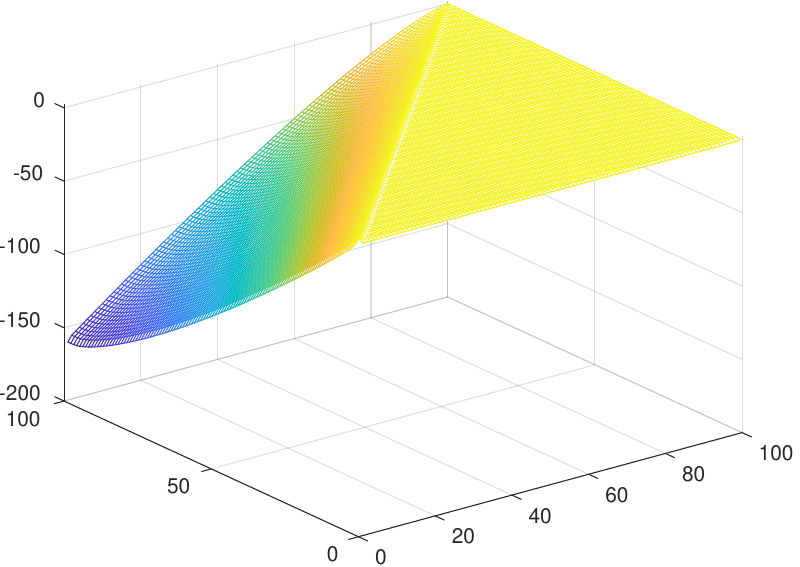}
	\includegraphics[scale=0.4]{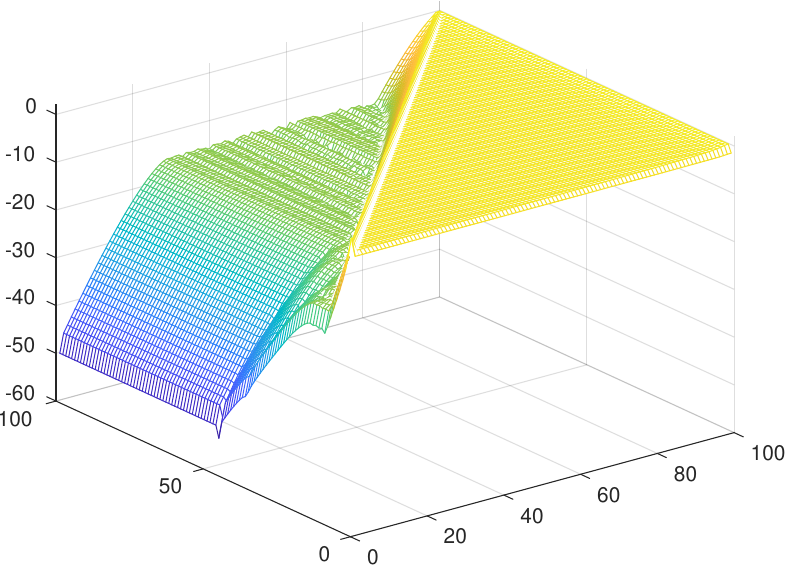}
	\caption{Log plot of the absolute value of the matrix square root in Test 2 with $n=100$, and $\varepsilon=10^{-8}$, computed by means of Algorithm \ref{alg:cr} (left), and by the MATLAB command {\tt sqrtm} (right).}\label{fig:sol1}
\end{figure}

\begin{figure}
	\centering
	\includegraphics[scale=0.4]{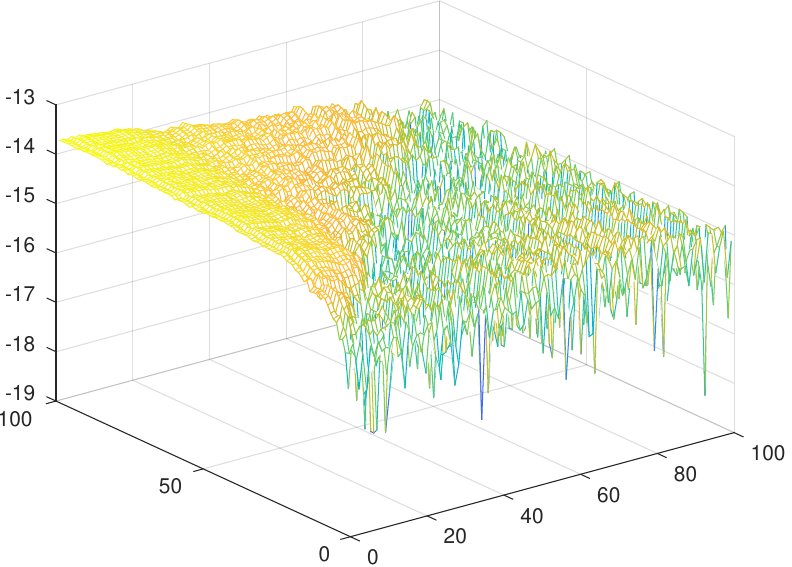}
	\includegraphics[scale=0.4]{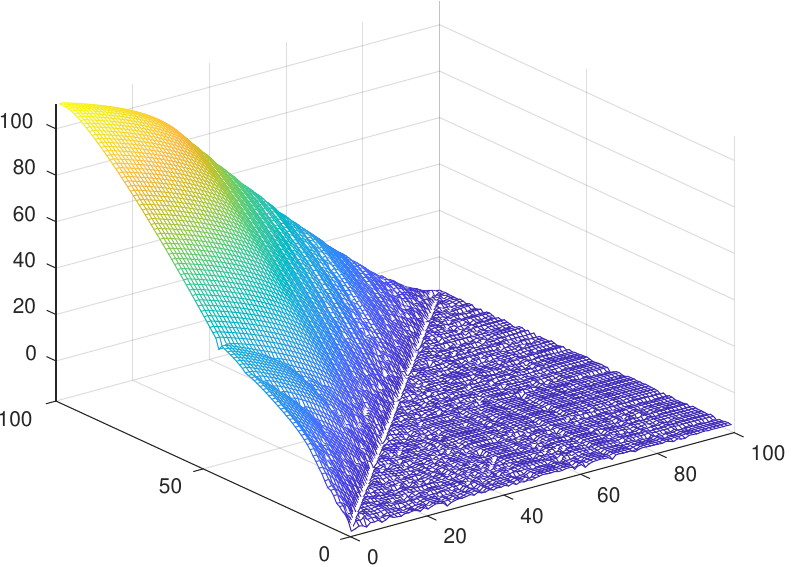}\\\includegraphics[scale=0.4]{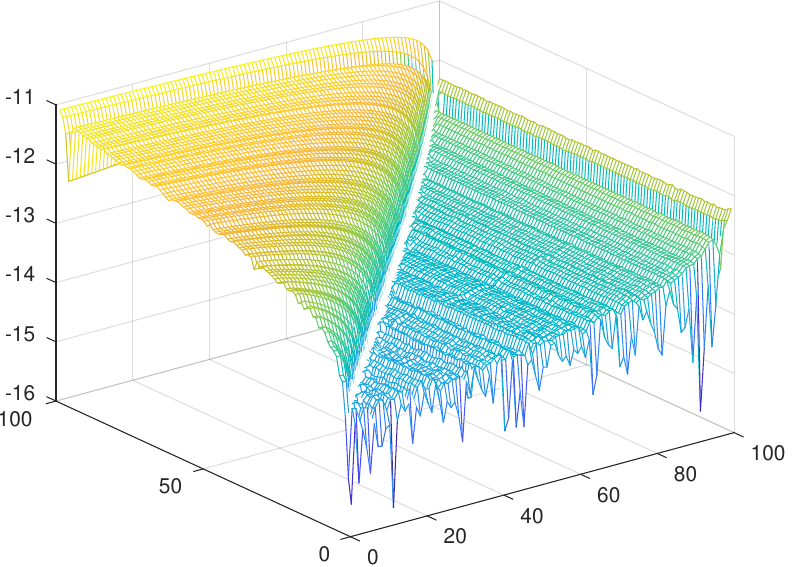}
	\includegraphics[scale=0.4]{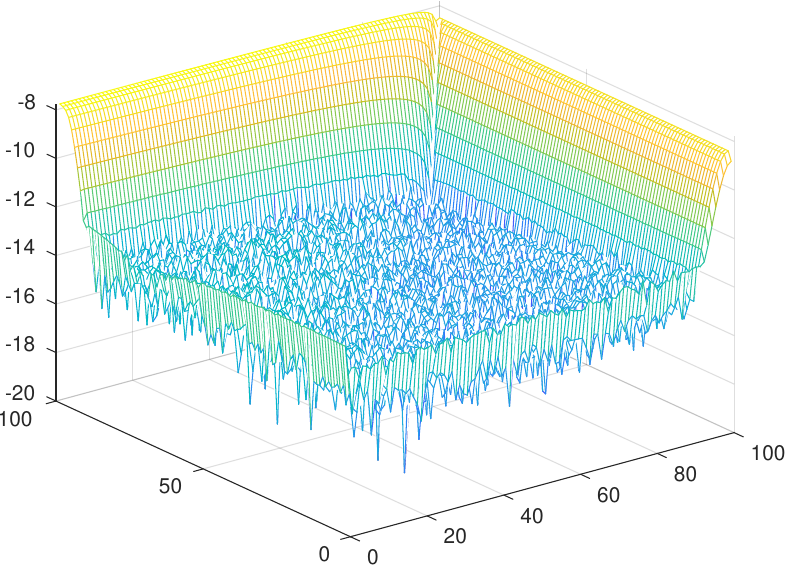}
	\caption{Log plot of the relative errors in the matrix square root of Test 3 with $n=100$, and $\varepsilon=10^{-2}$, computed by means of Algorithm \ref{alg:cr} (top left), by the MATLAB command {\tt sqrtm} (top right),  by Algorithm \ref{alg:crshifted} (bottom left), and by standard CR (bottom right).}\label{fig:err1}
\end{figure}

Figure \ref{fig:sol1} displays, in log scale, the modulus of the entries of $A^{1/2}$ computed with CR, and  with the command {\tt sqrtm} for $\varepsilon=10^{-8}$. Figure \ref{fig:err1} displays the relative errors generated by the different algorithms.

\subsection*{Test 3: Nonsingular M-matrix}
Now, we consider the case where $A$ is a nonsingular and well-conditioned matrix. More precisely,  we set $A=(a_{ij})$, $a_{ij}=-1$ for $j-i>0$, and for $0<i-j<n/4$, while $a_{ii}=n$, and $a_{ij}=0$ elsewhere. In this case, $A$ is nonsingular, $\hbox{cond}_2(A)< 4$. From the shape of $A^{1/2}$  shown in Figure \ref{fig:sol3}, we may observe that the moduli of the entries are in the range $[10^{-6},10^2]$.
Table \ref{tab:tst3} reports the component-wise relative errors of the different algorithms.
\begin{figure}
	\centering
	\includegraphics[scale=0.4]{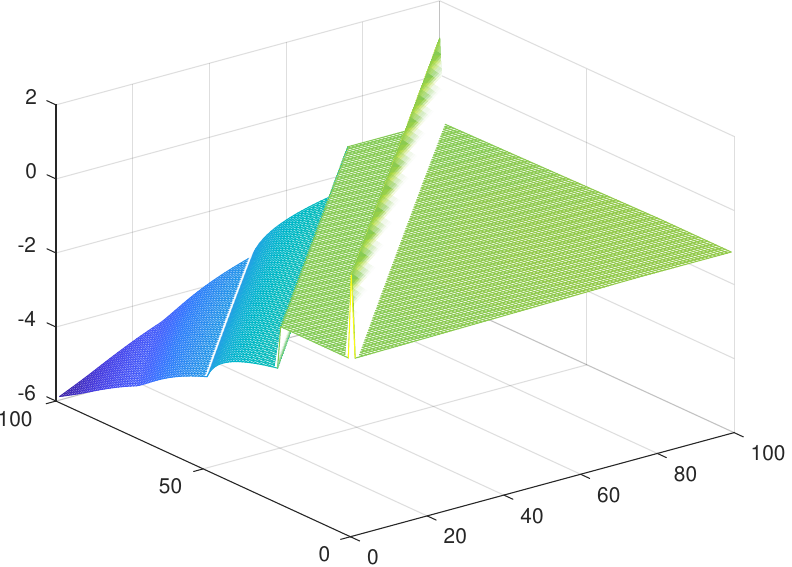}
\caption{Log plot of the absolute value of the matrix square root in Test 3 with $n=100$.}\label{fig:sol3}
\end{figure}

In this test, we observe that not only do the versions of CR and IN based on triplet representation perform accurately, but standard CR also produces very accurate results. At the same time, {\tt sqrtm} fails to provide accurate approximations of the entries of smallest modulus.

We may also observe that, since CR and IN compute the solution in a few iterations, the CPU time taken by these algorithms is close to that taken by {\tt sqrtm}. For instance, for $n=400$,  the execution of {\tt sqrtm} requires {\tt 1.4e-01} seconds, while CR, IN, and standard CR  require {\tt 7.0e-01}, {\tt 6.8e-01}, and {\tt 6.56e-02} seconds, respectively.

\begin{table} 
	\centering \footnotesize
	\begin{tabular}{c|ccc|cc}
		$n$ &  CR      & IN       &  CR-stnd  &{\tt sqrtm}\\ \hline
		$10$&  8.7e-16 &  1.2e-15 &  4.0e-15 &  8.7e-06\\ 
		$20$&  1.2e-15 &  1.4e-15 &  4.4e-15 &  4.4e-04 \\ 
		$50$&  1.8e-15 &  1.8e-15 &  1.7e-14 &  3.3e-09 \\ 
    	$100$& 4.0e-14 &  1.7e-13 &  6.8e-14 &  3.7e-09 \\
		$200$&  3.4e-15 & 1.7e-14 &  3.8e-13 &  1.1e-08\\  
  		$400$&  2.7e-13 & 1.4e-12 &  1.7e-12 &  2.5e-08\\  
	\end{tabular}
	\caption{\rm Component-wise relative errors for Test 3 for different values of the matrix size $n$.}
	\label{tab:tst3}
\end{table}

\section{Conclusions}\label{sec:conc}

The triplet representation of an M-matrix $A$, that exists under customary assumptions (e.g., $A$ invertible or singular irreducible), is a useful tool to design algorithms whose accuracy is not related to the conditioning, because they are mostly based on sums of numbers of the same sign.

We have applied this technology to the computation of the square root $A^{1/2}$ in an accurate way, for the first time, as far as we know, adapting some of the existing algorithms, i.e., the CR and the IN iterations. The numerical results are striking.

Some algorithms for the square root, such as the Bj\"orck-Hammarling method, destroy from the beginning the positivity structure; while others, like the Newton method and the Denman-Beavers method, produce sequences of M-matrix, but we were not able to describe them in terms of triplets. 

The CR, besides being related to the Newton method, is also a method to solve a quadratic matrix equation, which in our article is obtained by implicitly applying the Cayley transform to the equation $X^2-A=0$. In future work, we plan to study other methods for the square root based on quadratic matrix equation solution with CR and Doubling algorithms.

A related problem that deserves future research is the computation of the $p$th root of an M-matrix $A$, which shares with the square root the fact that the Newton method applied to $X^p-A=0$ produces a sequence of M-matrices.

\section*{Acknowledgments}
The authors wish to thank Fabio Durastante and Federico Poloni for fruitful discussions and for providing some references and applications.

This work has been partially supported by: INdAM through a GNCS project; PRA 2022 WP 4.1 ``Rationalists'' funded by the University of Perugia; the Italian Ministry of University and Research (MUR) through the PRIN 2022ZKME7 CUP B53C24006410006; CIAICO/2023/275 ``Optimized computation of matrix functions and applications to artificial intelligence'';  the Italian Ministry of University and Research (MUR) through the PRIN 2022 ``Low-rank Structures and Numerical Methods in Matrix and Tensor Computations and their Application'' code 20227PCCKZ MUR D.D. financing decree n. 104 of February 2nd, 2022 (CUP I53D23002280006), and through the MUR Excellence Department Project awarded to the Department of Mathematics, University of Pisa, CUP I57G22000700001.
The second and the third author are also affiliated to the INdAM-GNCS (Gruppo Nazionale di Calcolo Scientifico). The fourth author is supported  by the National Natural Science Foundation of China under grant No. 12201591.

\end{document}